\theoremstyle{plain}\newtheorem{Theorem}{Theorem}[section]
\theoremstyle{plain}
\theoremstyle{plain}\newtheorem{Corollary}[Theorem]{Corollary}
\theoremstyle{plain}\newtheorem{Lemma}[Theorem]{Lemma}
\theoremstyle{plain}\newtheorem{Proposition}[Theorem]{Proposition}
\theoremstyle{definition}\newtheorem{Definition}[Theorem]{Definition}
\theoremstyle{definition}\newtheorem{Example}[Theorem]{Example}
\theoremstyle{definition}
\theoremstyle{definition}
\theoremstyle{definition}
\theoremstyle{definition}\newtheorem{Remark}[Theorem]{Remark}
\theoremstyle{definition}
\def\CE{{\mathcal{E}}}  
\def\CF{{\mathcal{F}}}
\def\CV{{\mathcal{V}}}
\def\Aut{\mathrm{Aut}}               \def\tenk{\otimes_k}     
\def\Br{\mathrm{Br}}                 \def\ten{\otimes}
\def\dim{\mathrm{dim}}   \def\tenkP{\otimes_{kP}} \def\tenkQ{\otimes_{kQ}}
\def\Ext{\mathrm{Ext}}        \def\tenkR{\otimes_{kR}}
\def\GL{\mathrm{GL}}
\def\HH{H\!H}
\def\Hom{\mathrm{Hom}}           \def\tenkS{\otimes_{kS}}
\def\ker{\mathrm{ker}}
      \def\tenB{\otimes_B}
\def\Ind{\mathrm{Ind}}
\def\mod{\mathrm{mod}}
\def\Res{\mathrm{Res}}           
\def\res{\mathrm{res}}         
\title{Inverse images of block varieties} 
\author{Markus Linckelmann} 
\date{\today}
\begin{document}

\begin{abstract}
We extend a result due to Kawai on block varieties  for blocks with abelian defect 
groups to blocks with arbitrary defect groups.  This partially answers a question by J. Rickard.

\end{abstract}

\maketitle

\section{Introduction} %%%%%%%%%%%%%%%%%%%%%%%%%%%%%%%%%%

Throughout this paper, $k$ is an algebraically closed field of prime characteristic $p$.
Given a finite group $G$, we set $H^*(G)=$ $H^*(G,k)=$ $\Ext^*_{kG}(k,k)$ and denote
by $\CV_G$ the maximal ideal spectrum of $H^*(G)$. For $H$ a subgroup of $G$, denote
by $\res^G_H : H^*(G)\to$ $H^*(H)$ the restriction map and by $(\res^G_H)^* : \CV_H\to$
$\CV_G$ the induced map on varieties. For $M$ a finitely generated $kG$-module, denote
by $I_G(M)$ the kernel of the algebra homomorphism $H^*(G)\to$ $\Ext^*_{kG}(M,M)$
induced by the functor $M\tenk-$ on the category $\mod(kG)$ of finitely generated
$kG$-modules. Denote by $\CV_G(M)$ the closed homogeneous subvariety of $\CV_G$ of
all maximal ideals of $H^*(G)$ which contain $I_G(M)$.  The map $(\res^G_H)^*$ sends
$\CV_H(\Res^G_H(M))$ to $\CV_G(M)$, and hence $\CV_H(\Res^G_H(M))$ is contained in the
inverse image of $\CV_G(M)$ under the map $(\res^G_H)^*$. 
By a result of Avrunin and Scott \cite[Theorem 3.1]{AvSc}, this inclusion is an equality; 
that is, we have 
$$\CV_H(\Res^G_H(M))=((\res^G_H)^*)^{-1}(\CV_G(M))\ .$$
Kawai proved in \cite[Proposition 5.2]{Kaw} a version of this result for block varieties
of blocks with abelian defect groups, and Rickard raised the question whether such a
result holds for blocks in general. The purpose of this paper is to extend Kawai's
result to a statement on  blocks with arbitrary defect groups which at least partially
answers Rickard's question and identifies the main issues that remain for a complete
answer.

Given a block $B$ of $kG$ with a defect group
$P$, an almost source idempotent $i\in$ $B^P$ and associated fusion system $\CF$ on $P$, 
we denote by $H^*(B)$ the block cohomology, identified with the subalgebra of all $\CF$-stable
elements in $H^*(P)$, and we denote by $\CV_B$ the maximal ideal spectrum of $H^*(B)$. 

For $Q$ a subgroup of $P$, we denote by $r_Q : H^*(B)\to H^*(Q)$ the
composition of the inclusion $H^*(B)\to H^*(P)$ and the restriction map $\res^P_Q : H^*(P)
\to H^*(Q)$.  We denote by $r^*_Q : \CV_Q \to \CV_B$ the map on varieties induced by $r_Q$.
For $M$ a finitely generated $B$-module, set $\CV_B(M) = r^*_P(\CV_P(iM))$; by Lemma
\ref{almostsourcevariety} this definition depends not on $i$ but only on the underlying
choice of a maximal $B$-Brauer pair. We have
an obvious inclusion $\CV_P(iM) \subseteq (r_P^*)^{-1}(\CV_B(M))$.  If $P$ is abelian, then
this inclusion is an equality, by Kawai \cite[Proposition 5.2]{Kaw}.  We are going to show
that for arbitrary $P$, this inclusion becomes an equality if the $kP$-module $iM$ is
$\CF$-stable, or more generally, if we  replace $iM$ by an $\CF$-stable
$kP$-module having $iM$ as a direct summand. 

We review the above  terminology in subsequent sections, and we refer to 
Proposition \ref{BLObiset} below for the notion of an $\CF$-characteristic
$P$-$P$-biset. The proofs of the
following statements are given in Section \ref{proofSection}.

\begin{Theorem} \label{thm1}
Let $G$ be a finite group, $B$ a block of $kG$, $P$ a defect group of $B$,  $i$ an almost 
source idempotent in $B^P$, and let $\CF$ be the fusion system on $P$ determined by $i$. 
Let $X$ be an $\CF$-characteristic  $P$-$P$-biset, and  let $M$ be a  finitely 
generated $B$-module.   For every  subgroup $Q$ of $P$ we have
$$\CV_Q(kX\tenkP iM) = (r_Q^*)^{-1}(\CV_B(M))\ .$$
\end{Theorem}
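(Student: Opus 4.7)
My plan is to first reduce the statement to the case $Q = P$. Since $r_Q = \res^P_Q \circ r_P$, we have $r_Q^* = r_P^* \circ (\res^P_Q)^*$, and therefore
\[
(r_Q^*)^{-1}(\CV_B(M)) \;=\; ((\res^P_Q)^*)^{-1}\bigl((r_P^*)^{-1}(\CV_B(M))\bigr).
\]
Applying the Avrunin-Scott theorem \cite[Theorem 3.1]{AvSc} to the $kP$-module $N := kX \tenkP iM$ gives $\CV_Q(N) = ((\res^P_Q)^*)^{-1}(\CV_P(N))$, so the general case follows from the equality $\CV_P(N) = (r_P^*)^{-1}(\CV_B(M))$.

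For the inclusion $\CV_P(N) \subseteq (r_P^*)^{-1}(\CV_B(M))$, I would decompose $X$ as a disjoint union of transitive $P$-$P$-bisets, each corresponding to an $\CF$-morphism $\varphi_\alpha : Q_\alpha \to P$, so that $N$ splits as a direct sum of modules of the form $\Ind_{Q_\alpha}^{P}\bigl({}^{\varphi_\alpha}\!\Res^P_{\varphi_\alpha(Q_\alpha)} iM\bigr)$. Combining the standard equality $\CV_P(\Ind_{Q_\alpha}^P L) = (\res^P_{Q_\alpha})^*(\CV_{Q_\alpha}(L))$ with Avrunin-Scott applied to $\varphi_\alpha(Q_\alpha) \leq P$ writes the variety of each summand as $(\res^P_{Q_\alpha})^* \varphi_\alpha^* \bigl((\res^P_{\varphi_\alpha(Q_\alpha)})^*\bigr)^{-1}(\CV_P(iM))$. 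Applying $r_P^*$, and using that $\CF$-stability of $H^*(B) \subseteq H^*(P)$ forces the factorisation $r_{Q_\alpha}^* = r_{\varphi_\alpha(Q_\alpha)}^* \circ \varphi_\alpha^*$ along $\varphi_\alpha$, collapses each of these pieces into $r_P^*(\CV_P(iM)) = \CV_B(M)$, yielding the desired containment.

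For the reverse inclusion $(r_P^*)^{-1}(\CV_B(M)) \subseteq \CV_P(N)$, I would exploit that the $\CF$-characteristic property of $X$ makes $N$ an $\CF$-stable $kP$-module and, in particular (by Proposition \ref{BLObiset}), exhibits $iM$ as a direct summand of $N$. Thus $\CV_P(N)$ is a closed, $\CF$-invariant subset of $\CV_P$ containing $\CV_P(iM)$. The identification $H^*(B) = H^*(P)^\CF$ makes $H^*(P)$ an integral extension of $H^*(B)$, and the induced map $r_P^*$ identifies $\CF$-orbits on $\CV_P$; consequently $(r_P^*)^{-1}(\CV_B(M)) = (r_P^*)^{-1}(r_P^*(\CV_P(iM)))$ coincides with the $\CF$-closure of $\CV_P(iM)$, which is contained in the $\CF$-stable set $\CV_P(N)$.

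The main obstacle is the last assertion: that the fibres of $r_P^*$ on maximal ideals are precisely the $\CF$-orbits in $\CV_P$. Although this is morally encoded in the identity $H^*(B) = H^*(P)^\CF$, a genuine separation argument is required to show that two $\CF$-inequivalent maximal ideals of $H^*(P)$ cannot contract to the same maximal ideal of $H^*(B)$. This is where the hypothesis on $X$ plays an essential role beyond mere $\CF$-stability of $iM$: the transfer map on cohomology induced by $X$ realises $H^*(B)$ as a direct summand of $H^*(P)$ (after scaling by the unit $|X|/|P| \in k^{\times}$), which, together with Evens-type norm arguments, supplies the input needed for the required separation.
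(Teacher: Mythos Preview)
Your reduction to $Q = P$ via Avrunin--Scott and your treatment of the inclusion $\CV_P(N) \subseteq (r_P^*)^{-1}(\CV_B(M))$ are essentially the paper's argument (the paper packages the latter as Lemma~\ref{stablebisetvarietyQ}(iii), but the content is the same decomposition of $X$ into transitive bisets that you describe).

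The gap is exactly where you locate it. You need that two maximal ideals of $H^*(P)$ with the same contraction to $H^*(B)$ are ``$\CF$-equivalent'', and your proposed remedy---that the $X$-transfer exhibits $H^*(B)$ as an $H^*(B)$-module direct summand of $H^*(P)$---does not by itself deliver this. That splitting gives integrality of $H^*(P)$ over $H^*(B)$ and surjectivity of $r_P^*$, but integrality alone does not separate fibres: even for an honest finite group quotient $\CV_P \to \CV_P/E$ the analogous statement (that fibres are $E$-orbits) uses that $H^*(P)^E$ is the full invariant ring, and for a saturated fusion system $\CF$ there is no group acting on $\CV_P$ whose invariant ring is $H^*(B)$. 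Indeed the phrase ``$\CF$-orbits in $\CV_P$'' is only a heuristic: $\CF$ does not act on $\CV_P$, and its precise meaning has to be formulated stratum by stratum over elementary abelian subgroups.

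The paper supplies this step not by a transfer/norm argument but by invoking the block Quillen stratification, Theorem~\ref{Quillenstrat} (established in \cite{LinQuillen}). Concretely: given $z \in (r_P^*)^{-1}(\CV_B(M))$, write $z = (\res^P_E)^*(s)$ with $s \in \CV_E^+$ via ordinary Quillen stratification for $\CV_P$; then $r_P^*(z) = r_E^*(s)$ lies in some stratum $\CV_{B,F}^+(M) = r_F^*(\CV_F^+(iM))$ of $\CV_B(M)$, with $F$ fully $\CF$-centralised. Theorem~\ref{Quillenstrat} forces $E$ and $F$ to be $\CF$-isomorphic via some $\varphi$, and (after adjusting $\varphi$ by an element of $\Aut_\CF(F)$) forces $\res_\varphi^*(s) \in \CV_F^+(iM) \subseteq \CV_F^+(N)$. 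The $\CF$-stability of $N$ then pulls this back to $s \in \CV_E^+(N)$, hence $z \in \CV_P(N)$. Your ``$\CF$-closure'' picture is the correct intuition for this argument, but the block Quillen stratification is the technical engine that makes it rigorous; an independent transfer/norm proof along the lines you sketch would essentially amount to re-deriving that stratification.
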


We do not have an example where the inclusion $\CV_P(iM)\subseteq$ $\CV_P(kX\tenkP iM)$
is proper. We list a number of cases where this inclusion is an equality.
Tensoring a $kP$-module $U$ by the bimodule $kX$ amounts to taking an $\CF$-stable
closure of $U$ (see Definition \ref{stablemoduleDef}). 
If $U$ is already $\CF$-stable, then we will see
in Lemma \ref{stablebisetvariety} below that
$\CV_P(kX\tenkP U)=$ $\CV_P(U)$. 
Thus Theorem \ref{thm1} has the following immediate consequence.

\begin{Corollary} \label{cor1}
Let $G$ be a finite group, $B$ a block of $kG$, $P$ a defect group of $B$, and $i$ an almost 
source idempotent in $B^P$. Let $\CF$ be the fusion system on $P$ determined by $i$,
and  let $M$ be a  finitely generated $B$-module.  
Suppose that the $kP$-module $iM$ is $\CF$-stable. 
For every  subgroup $Q$ of $P$ we have
$$\CV_Q(iM) = (r_Q^*)^{-1}(\CV_B(M))\ .$$
\end{Corollary}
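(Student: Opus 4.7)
The plan is to deduce the corollary directly from Theorem \ref{thm1} by observing that, when $iM$ is $\CF$-stable, tensoring with the bimodule $kX$ does not change the variety $\CV_Q$ for any subgroup $Q\le P$.

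First, I would fix an $\CF$-characteristic $P$-$P$-biset $X$ and invoke Theorem \ref{thm1} to record the identity
$$\CV_Q(kX\tenkP iM) = (r_Q^*)^{-1}(\CV_B(M))\ .$$
It then suffices to establish the equality $\CV_Q(iM)=\CV_Q(kX\tenkP iM)$ under the hypothesis that $iM$ is $\CF$-stable.

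For $Q=P$ this is precisely the content of Lemma \ref{stablebisetvariety}. To pass to a general subgroup $Q\le P$, I would apply the classical Avrunin--Scott theorem \cite[Theorem 3.1]{AvSc} to the inclusion $Q\le P$: for any finitely generated $kP$-module $V$ one has
$$\CV_Q(\Res^P_Q(V)) = ((\res^P_Q)^*)^{-1}(\CV_P(V))\ .$$
Taking $V=iM$ and $V=kX\tenkP iM$ in turn, and using Lemma \ref{stablebisetvariety} to identify $\CV_P(iM)=\CV_P(kX\tenkP iM)$, the right-hand sides agree, so $\CV_Q(iM)=\CV_Q(kX\tenkP iM)$. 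Combined with the displayed identity above this yields the corollary.

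The substantive content is all absorbed into Theorem \ref{thm1} and the $\CF$-stability invariance recorded in Lemma \ref{stablebisetvariety}; the remaining deduction is routine. The only point worth pausing on is ensuring that the notation $\CV_Q(iM)$, where $iM$ is originally a $B$-module but is being viewed through its $kP$-module structure and then restricted to $kQ$, is compatible with the classical Avrunin--Scott formulation, which is immediate from the definitions recalled in the introduction.
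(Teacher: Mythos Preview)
Your proposal is correct and follows essentially the same route as the paper: combine Theorem~\ref{thm1} with Lemma~\ref{stablebisetvariety}, using Avrunin--Scott to pass from $Q=P$ to an arbitrary subgroup $Q$. The paper records the Avrunin--Scott reduction to $Q=P$ once, just before the proofs of the corollaries, and then states the proof of Corollary~\ref{cor1} in one line; your version simply makes that reduction explicit within the argument.
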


It is not known whether there is always at least some almost source idempotent $i$ with
the property that $iM$ is fusion-stable for every finitely generated $B$-module $M$. 
We will see in Proposition \ref{invertiblebasis}  that this is the case if $iBi$ has a 
$P$-$P$-stable $k$-basis consisting of invertible elements in $iBi$.

\begin{Corollary} \label{cor2}
Let $G$ be a finite group, $B$ a block of $kG$, $P$ a defect group of $B$, and $i$ an almost 
source idempotent in $B^P$. Suppose that $iBi$ has a  $P$-$P$-stable $k$-basis contained in
$(iBi)^\times$. Then for every finitely generated $B$-module $M$ and any subgroup $Q$ 
of $P$ we have  
$$\CV_Q(iM) = (r_Q^*)^{-1}(\CV_B(M))\ .$$
\end{Corollary}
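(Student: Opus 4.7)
The plan is to deduce Corollary \ref{cor2} as an immediate consequence of Corollary \ref{cor1} together with Proposition \ref{invertiblebasis}. The hypothesis that $iBi$ admits a $P$-$P$-stable $k$-basis contained in $(iBi)^\times$ is exactly the one appearing in Proposition \ref{invertiblebasis}, which tells us that under this hypothesis the $kP$-module $iM$ is $\CF$-stable for every finitely generated $B$-module $M$. Once that $\CF$-stability is in hand, Corollary \ref{cor1} applies verbatim to $M$ and to any subgroup $Q$ of $P$, yielding the claimed equality $\CV_Q(iM) = (r_Q^*)^{-1}(\CV_B(M))$. The statement of Corollary \ref{cor2} is therefore a two-line deduction, and its substantive content is entirely packaged inside Proposition \ref{invertiblebasis}.

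To be convinced of Proposition \ref{invertiblebasis}, I would recall that the fusion system $\CF$ determined by $i$ has the property that every morphism $\varphi : R \to S$ between subgroups of $P$ is realized by conjugation with an element $c \in (iBi)^\times$ satisfying $c R c^{-1} = S$ inside $P$ and $\varphi(x) = c x c^{-1}$ for all $x \in R$. Given a $P$-$P$-stable $k$-basis of $iBi$ consisting of units, one can arrange to extract from this data a basis element $c$ realizing $\varphi$; the $P$-$P$-stability of the basis is what makes such an extraction natural. Left multiplication by $c$ then defines a $k$-linear automorphism of $iM$ satisfying $c\cdot(x\cdot m) = \varphi(x)\cdot(c\cdot m)$, which is precisely the isomorphism of $kR$-modules between $\Res^P_R(iM)$ and the twist of $\Res^P_S(iM)$ along $\varphi$ that $\CF$-stability of $iM$ requires.

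The main obstacle is the careful verification of Proposition \ref{invertiblebasis} itself: one must check that the intertwining identity $cx = \varphi(x) c$ genuinely upgrades $m \mapsto cm$ to a $kR$-module isomorphism onto the twist, and that this can be done consistently across all fusion morphisms. Making this cohere typically requires exploiting the fact that $i$ is an almost source idempotent (so $i \in (kG)^P$ and $iBi$ is a $P$-interior algebra), together with the fact that basis elements are units, to ensure that the $c$ chosen to realize $\varphi$ really does act invertibly on the free summand $iM$. Once Proposition \ref{invertiblebasis} is secured along these lines, the deduction of Corollary \ref{cor2} from Corollary \ref{cor1} is immediate and the proof concludes without further calculation.
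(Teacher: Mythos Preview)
Your proposal is correct and matches the paper's own proof exactly: the paper simply writes ``This follows from Corollary \ref{cor1} and Proposition \ref{invertiblebasis}.'' Your additional paragraphs sketching the proof of Proposition \ref{invertiblebasis} are supplementary (that proposition is proved independently in the paper, via Alperin's Fusion Theorem and \cite[Proposition 8.7.10]{LiBookII}) and not required for the deduction of Corollary \ref{cor2}, but your outline of the intertwining argument is essentially what the paper does there as well.
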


Barker and Gelvin conjectured in \cite{BarGel}, that every block with a defect group $P$
should indeed have an almost source algebra with a $P$-$P$-stable basis consisting 
of invertible elements.
If $\CF=N_\CF(P)$ and $i$ a source idempotent,  then it is easy to show that $iM$ is 
$\CF$-stable for any finitely generated $B$-module $M$. We deduce the following result.

\begin{Corollary} \label{cor3}
Let $G$ be a finite group, $B$ a block of $kG$, $P$ a defect group of $B$, and $i$ an almost 
source idempotent in $B^P$. Let $\CF$ be the fusion system on $P$ determined by $i$.
Suppose that $\CF=N_\CF(P)$. Then for any finitely generated $B$-module $M$ and any
subgroup $Q$ of $P$ we have 
$$\CV_Q(iM) = (r_Q^*)^{-1}(\CV_B(M))\ .$$
\end{Corollary}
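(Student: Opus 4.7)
The plan is to reduce the corollary to Corollary \ref{cor1} by verifying that the hypothesis $\CF = N_\CF(P)$ forces the $kP$-module $iM$ to be $\CF$-stable. Once this is established, Corollary \ref{cor1} yields the claimed equality immediately.

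First I would reformulate the stability condition. The equality $\CF = N_\CF(P)$ says precisely that every morphism $\varphi \colon Q \to R$ in $\CF$ between subgroups of $P$ is the restriction of some automorphism $\tilde\varphi \in \Aut_\CF(P)$. Hence it suffices to show that for every $\tilde\varphi \in \Aut_\CF(P)$ the twisted $kP$-module ${}^{\tilde\varphi}(iM)$ is isomorphic to $iM$; restricting such an isomorphism along $\varphi$ then supplies the isomorphism of $kQ$-modules required for $\CF$-stability.

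The key step, and the main obstacle, is to realise each $\tilde\varphi \in \Aut_\CF(P)$ by conjugation by a unit of the almost source algebra $iBi$. This is a standard property of (almost) source algebras: the fusion system $\CF$ is exactly the one induced on the image $\{pi : p \in P\}$ of $P$ in $iBi$ by conjugation by elements of $(iBi)^\times$; concretely, $\Aut_\CF(P)$ is realised by $N_G(P,e)/P C_G(P)$, where $e$ is the block of $kC_G(P)$ selected by $i$, and lifts in $N_G(P,e)$ can be adjusted to units of $(iBi)^\times$ via primitive idempotent conjugacy in $kC_G(P)e$. Granting this, one obtains $s \in (iBi)^\times$ with $s(pi)s^{-1} = \tilde\varphi(p)\,i$ for all $p \in P$, and left multiplication $x \mapsto sx$ becomes a $k$-linear bijection $iM \to iM$ satisfying
$$s \cdot (p \cdot x) \;=\; \bigl(s(pi)s^{-1}\bigr)(sx) \;=\; \tilde\varphi(p)\cdot(sx),$$
which exhibits the required $kP$-module isomorphism $iM \cong {}^{\tilde\varphi}(iM)$ and thereby completes the verification that $iM$ is $\CF$-stable. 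The rest of the argument is then a direct invocation of Corollary \ref{cor1}.
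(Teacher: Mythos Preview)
Your strategy coincides with the paper's: reduce to Corollary~\ref{cor1} by showing that $iM$ is $\CF$-stable, which under the hypothesis $\CF=N_\CF(P)$ amounts to checking that ${_{\tilde\varphi}(iM)}\cong iM$ for every $\tilde\varphi\in\Aut_\CF(P)$. The paper packages this as Proposition~\ref{PnormalF}, whose proof rests on Lemma~\ref{Pfusiondetect} (Puig's characterisation ${_{\tilde\varphi}{iB}}\cong iB$), and then tensors with $M$ over $B$.

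There is, however, a genuine gap in your argument. Your key step asserts that every $\tilde\varphi\in\Aut_\CF(P)$ is realised by conjugation by a unit of $(iBi)^\times$, and you call this ``a standard property of (almost) source algebras''. This is only established for \emph{source} idempotents; indeed Lemma~\ref{Pfusiondetect} is stated for a source idempotent, and your own sketch (``lifts in $N_G(P,e)$ can be adjusted to units of $(iBi)^\times$ via primitive idempotent conjugacy in $kC_G(P)e$'') presupposes that $\Br_P(i)$ is primitive. For an almost source idempotent one has $i=i_0+i_1$ with $i_0$ a source idempotent and $i_1$ possibly involving primitive summands lying in non-local points of $P$ on $B$; there is no reason why ${_{\tilde\varphi}(i_1B)}\cong i_1B$, and hence no reason why your unit $s$ exists in $(iBi)^\times$.

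The paper closes this gap by an explicit reduction: thanks to Lemma~\ref{almostsourcevariety} one has $\CV_P(i_0M)\subseteq\CV_P(iM)$ and $r_P^*(\CV_P(iM))=\CV_B(M)$, so once the equality $\CV_P(i_0M)=(r_P^*)^{-1}(\CV_B(M))$ is known for the source idempotent $i_0$, the sandwich $\CV_P(i_0M)\subseteq\CV_P(iM)\subseteq (r_P^*)^{-1}(\CV_B(M))$ forces the same equality for $i$. You should insert this reduction before invoking the unit argument.
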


It is well-known that if $P$ is abelian, then $\CF=N_\CF(P)$. Thus we obtain Kawai's
result mentioned above:

\begin{Corollary}[{Kawai \cite[Proposition 5.2]{Kaw}}]  \label{cor4}
Let $G$ be a finite group, $B$ a block of $kG$, $P$ a defect group of $B$, and $i$ an almost 
source idempotent in $B^P$. Suppose that $P$ is abelian.
Then for any finitely generated $B$-module $M$ and any
subgroup $Q$ of $P$ we have 
$$\CV_Q(iM) = (r_Q^*)^{-1}(\CV_B(M))\ .$$
\end{Corollary}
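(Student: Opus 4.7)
The plan is to derive Corollary \ref{cor4} as an immediate consequence of Corollary \ref{cor3}. Since Corollary \ref{cor3} already yields the desired equality under the hypothesis $\CF = N_\CF(P)$, the only thing I need to verify is that this hypothesis is automatic when the defect group $P$ is abelian.

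Recall that $N_\CF(P)$ is the fusion subsystem on $P$ whose morphisms are the $\CF$-morphisms $\varphi \colon Q \to R$ between subgroups of $P$ which extend to an $\CF$-morphism $\tilde\varphi \colon N_P(Q) \to N_P(R)$ restricting to $\varphi$ on $Q$. When $P$ is abelian one has $N_P(Q) = P$ for every $Q \leq P$, so the condition $\CF = N_\CF(P)$ amounts to showing that every morphism in $\CF$ is the restriction of some element of $\Aut_\CF(P)$.

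To establish this, I would invoke Alperin's fusion theorem for fusion systems (due to Puig): every morphism in $\CF$ factors as a composition of restrictions of $\CF$-automorphisms of $\CF$-essential subgroups of $P$ and of $\Aut_\CF(P)$ itself. Any $\CF$-essential subgroup $R$ of $P$ must in particular be $\CF$-centric, so $C_P(R) \leq R$. But when $P$ is abelian, $C_P(R) = P$ for every $R \leq P$, forcing $R = P$. Hence the only essential subgroup is $P$ itself, and $\CF$ is generated by $\Aut_\CF(P)$. Equivalently, $\CF = N_\CF(P)$, and an appeal to Corollary \ref{cor3} delivers $\CV_Q(iM) = (r_Q^*)^{-1}(\CV_B(M))$ for every subgroup $Q$ of $P$.

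There is no genuine obstacle here: the argument is entirely a reduction to Corollary \ref{cor3} via the standard fact about abelian defect fusion systems, which the author himself flags as well-known in the sentence immediately preceding the statement. The only subtlety worth noting is that one needs the fusion-theoretic formulation of Alperin's theorem (with essential subgroups replacing the group-theoretic \emph{tame intersection} setup), but this is thoroughly routine in the modern block-theoretic literature.
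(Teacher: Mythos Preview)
Your proposal is correct and follows exactly the paper's approach: reduce to Corollary~\ref{cor3} by invoking the well-known fact that $\CF = N_\CF(P)$ when $P$ is abelian. The paper simply cites \cite[Proposition~8.3.8]{LiBookII} for this fact, whereas you spell out the standard argument via Alperin's fusion theorem and the absence of proper $\CF$-centric subgroups in an abelian $P$; either way the content is the same.
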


A block $B$ of $kG$ is {\it of principal type} if $\Br_Q(1_B)$ is a block of $kC_G(Q)$,
 for every subgroup $Q$ of $P$. If $B$ is a block of principal type,  then $1_B$ is an 
 almost source idempotent. Brauer's Third Main Theorem (see e. g. 
 \cite[Theorem 6.3.14]{LiBookII}) implies that the principal block of $kG$ is of principal
 type, and hence the principal block idempotent is an almost source idempotent.

\begin{Corollary} \label{cor5}
Let $G$ be a finite group, $B$ a block of $kG$, and $P$ a defect group of $B$. Suppose 
that $B$ is of principal type. 
Then for any finitely generated $B$-module $M$ and any
subgroup $Q$ of $P$ we have 
$$\CV_Q(M) = (r_Q^*)^{-1}(\CV_B(M))\ .$$
\end{Corollary}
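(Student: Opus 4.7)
The plan is to reduce Corollary~\ref{cor5} to Corollary~\ref{cor1} by choosing the almost source idempotent to be $i = 1_B$. Since $B$ is of principal type, the discussion immediately preceding Corollary~\ref{cor5} records that $1_B$ is an almost source idempotent in $B^P$; for this choice $iM = M$ (viewed through restriction as a $kP$-module), so the statement in question becomes precisely that of Corollary~\ref{cor1} for this particular $i$. It therefore suffices to verify, for every finitely generated $B$-module $M$, that its underlying $kP$-module is $\CF$-stable, where $\CF$ is the fusion system on $P$ determined by $i = 1_B$.

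The first task is to describe $\CF$ concretely. The principal type hypothesis asserts that $\Br_Q(1_B)$ is itself a block of $kC_G(Q)$ for every subgroup $Q$ of $P$, so the $B$-Brauer pairs contained in the maximal pair $(P, \Br_P(1_B))$ are exactly the pairs $(Q, \Br_Q(1_B))$ with $Q \le P$. A standard argument, essentially a parallel of Brauer's Third Main Theorem in the generality of principal type blocks, identifies the fusion system $\CF$ attached to the almost source algebra $1_B B 1_B = B$ with the fusion system on $P$ induced by $G$-conjugation that respects these Brauer pairs: explicitly, the $\CF$-morphisms $\varphi : Q \to P$ are the restrictions $c_g|_Q$ for those $g \in G$ satisfying ${}^g(Q, \Br_Q(1_B)) \le (P, \Br_P(1_B))$.

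With this description in hand, $\CF$-stability reduces to a short calculation. Given such a $\varphi = c_g$, the $k$-linear bijection $\lambda_g : M \to M$, $m \mapsto g m$, coming from the $kG$-action on $M$, satisfies
\[
\lambda_g(u m) \;=\; g u m \;=\; (g u g^{-1})\, g m \;=\; \varphi(u)\, \lambda_g(m)
\]
for all $u \in Q$ and all $m \in M$. Consequently $\lambda_g^{-1}$ furnishes a $kQ$-module isomorphism $\varphi^* \Res^P_{\varphi(Q)}(M) \cong \Res^P_Q(M)$, which is the defining condition of $\CF$-stability for $M$ as a $kP$-module. Applying Corollary~\ref{cor1} then delivers the required equality $\CV_Q(M) = (r_Q^*)^{-1}(\CV_B(M))$ for every $Q \le P$.

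The main obstacle I anticipate is not the module-theoretic calculation but the rigorous identification, in the second paragraph, of the fusion system defined intrinsically through the almost source algebra structure on $1_B B 1_B = B$ with the $G$-fusion system on the Brauer pairs $(Q, \Br_Q(1_B))$. Once the principal type hypothesis is shown to force these two \emph{a priori} distinct descriptions to coincide, the rest of the argument is routine.
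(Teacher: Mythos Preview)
Your approach is correct and coincides with the paper's: take $i=1_B$ (legitimate since $B$ is of principal type), observe that $iM=\Res^G_P(M)$ is $\CF$-stable because every $\CF$-morphism is conjugation by an element of $G$, and invoke Corollary~\ref{cor1}. Your module calculation is exactly the content of the paper's Lemma~\ref{Mstable}.

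Your ``main obstacle'' is not a genuine obstacle here. In this paper the fusion system $\CF$ determined by an almost source idempotent $i$ is \emph{defined} (Section~3, following Definition~\ref{blockcohomologyDef}) via $G$-conjugation of the Brauer pairs $(Q,e_Q)$: a morphism $\varphi:Q\to R$ lies in $\CF$ precisely when there is $x\in G$ with $\varphi=c_x|_Q$ and ${}^x e_Q=e_{xQx^{-1}}$. The description via the bimodule structure of $iBi$ is recorded as an equivalent alternative, not as the primary definition. So there are not two \emph{a priori} distinct descriptions to reconcile; the only fact you need is that every $\CF$-morphism is induced by some $x\in G$, which is immediate from the definition and is all that Lemma~\ref{Mstable} uses.
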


Corollary \ref{cor5} applies of course also to the principal block $B_0$ of $kG$, but in 
that case  the block variety $\CV_{B_0}(M)$  coincides with the cohomology variety 
$\CV_G(M)$, and hence Corollary \ref{cor5} for the prinicipal block follows directly 
from the result \cite[Theorem 3.1]{AvSc} of Avrunin and Scott.

It is shown in \cite[Theorem 1.1]{BeLi}   that if $M$ is indecomposable, then there is a 
choice of a vertex-source pair
$(Q,U)$ of $M$ such that $\CV_B(M)= r^*_Q(\CV_Q(U))$. For such a choice of $(Q,U)$
we have $\CV_Q(U)\subseteq$ $(r_Q^*)^{-1}(\CV_B(M))$. 
This inclusion need not be an equality in general, but it becomes 
an equality if we replace $U$ by the $kQ$-module $kX\tenkQ U$.

\begin{Theorem}  \label{thm2}
With the notation of Theorem \ref{thm1}, suppose that $i$ is a source idempotent
and that the $B$-module $M$ is indecomposable.  Let
$(Q,U)$ be a vertex-source pair of $M$ such that $Q\leq P$, such that $U$ is isomorphic to a direct
summand of $iM$ as a $kQ$-module, and such that $M$ is isomorphic to a direct summand
of $Bi \tenkQ U$. Regard $kX$ as a $kQ$-$kQ$-bimodule. Then we have 
$$\CV_Q(kX\tenkQ U) = (r_Q^*)^{-1}(\CV_B(M))\ .$$
\end{Theorem}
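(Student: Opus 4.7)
The plan is to deduce Theorem~\ref{thm2} from Theorem~\ref{thm1}. Applying Theorem~\ref{thm1} to $M$ at the subgroup $Q \leq P$ gives $\CV_Q(kX \tenkP iM) = (r_Q^*)^{-1}(\CV_B(M))$, so it suffices to prove the equality of $kQ$-support varieties
\[
\CV_Q(kX \tenkQ U) = \CV_Q(kX \tenkP iM),
\]
where the module on the right is viewed as a $kQ$-module via $Q \leq P$.

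To compare them, I would use the associativity isomorphism $kX \tenkQ U \cong kX \tenkP \Ind^P_Q U$ of $kP$-modules, so that both $kQ$-modules of interest have the form $\Res^P_Q(kX \tenkP N)$ for a $kP$-module $N$, namely $N = iM$ and $N = \Ind^P_Q U$ respectively. The Avrunin--Scott theorem \cite[Theorem 3.1]{AvSc} applied to the inclusion $Q \leq P$ yields
\[
\CV_Q\bigl(\Res^P_Q(kX \tenkP N)\bigr) = \bigl((\res^P_Q)^*\bigr)^{-1}\bigl(\CV_P(kX \tenkP N)\bigr),
\]
reducing the problem to the equality $\CV_P(kX \tenkP iM) = \CV_P(kX \tenkP \Ind^P_Q U)$ in $\CV_P$.

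The key input, which is Theorem~\ref{thm1} in the case $Q = P$ and which I expect to extend to arbitrary $kP$-modules by essentially the same argument, is that tensoring with $kX$ replaces the support variety by its $\CF$-closure:
\[
\CV_P(kX \tenkP N) = (r_P^*)^{-1}(r_P^*(\CV_P(N))).
\]
Granted this, it suffices to check $r_P^*(\CV_P(iM)) = r_P^*(\CV_P(\Ind^P_Q U))$ in $\CV_B$. The left-hand side equals $\CV_B(M)$ by definition, and for the right-hand side the standard support-variety formula $\CV_P(\Ind^P_Q U) = (\res^P_Q)^*(\CV_Q(U))$ together with the factorization $r_Q^* = r_P^* \circ (\res^P_Q)^*$ yields $r_P^*(\CV_P(\Ind^P_Q U)) = r_Q^*(\CV_Q(U))$, which equals $\CV_B(M)$ by \cite[Theorem 1.1]{BeLi} applied to the vertex-source pair $(Q,U)$ under the hypotheses of Theorem~\ref{thm2}.

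The main technical hurdle is verifying the $\CF$-closure interpretation of $\CV_P(kX \tenkP N)$ for $kP$-modules $N$ not necessarily of the form $iM$ for a $B$-module; this slightly extends the content of Theorem~\ref{thm1} at $Q = P$ beyond block-theoretic modules, but should follow from the same decomposition of $X$ into transitive $(P,P)$-bisets indexed by morphisms in $\CF$ that underlies the proof of Theorem~\ref{thm1}.
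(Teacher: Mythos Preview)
Your route differs from the paper's, and the step you flag as ``the main technical hurdle'' is a genuine gap rather than a minor extension.

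You reduce everything to the formula
\[
\CV_P(kX\tenkP N) = (r_P^*)^{-1}\bigl(r_P^*(\CV_P(N))\bigr)
\]
for an arbitrary $kP$-module $N$, and assert this ``should follow from the same decomposition of $X$ into transitive $(P,P)$-bisets''. But the biset decomposition of $X$ only gives the easy inclusion $\subseteq$ (this is Lemma~\ref{stablebisetvarietyQ}(iii)). The hard inclusion $\supseteq$ in the proof of Theorem~\ref{thm1} does \emph{not} come from the structure of $X$: it comes from the block Quillen stratification, Theorem~\ref{Quillenstrat}, which is a statement about $\CV_B(M)$ for a $B$-module $M$. To run the same argument for general $N$ you would need a stratification of the image $r_P^*(\CV_P(N))\subseteq\CV_B$ with the analogous isogeny properties; that is plausible but is not what Theorem~\ref{Quillenstrat} provides, and it is not a consequence of the decomposition of $X$. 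Note also that your target case $N=\Ind^P_Q(U)$, once combined with Avrunin--Scott and \cite{BeLi}, is essentially equivalent to the statement of Theorem~\ref{thm2} itself, so without an independent proof of the general $\CF$-closure formula your argument is circular.

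The paper bypasses this entirely. After invoking Theorem~\ref{thm1} to get $(r_Q^*)^{-1}(\CV_B(M))=\CV_Q(kX\tenkP iM)$, it uses the source-algebra hypothesis directly: $iM$ is a summand of $iBi\tenkQ U$, and if $Y$ is a $P$-$P$-stable basis of $iBi$ then (Lemma~\ref{iBibiset}) $Y$ satisfies properties (i) and (ii) of Proposition~\ref{BLObiset}, so $X\times_P Y\times_P X$ is again $\CF$-characteristic (Lemma~\ref{BLObisetLemma}). One then gets
\[
\CV_Q(kX\tenkP iM)\ \subseteq\ \CV_Q(kX\tenkP iBi\tenkP kX\tenkQ U)\ =\ \CV_Q(kX\tenkQ U)
\]
by Lemma~\ref{stablebisetvarietyQ}(ii), which closes the loop without ever needing an $\CF$-closure formula for modules outside the block. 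This is where the hypothesis that $i$ is a source idempotent is actually used, and it is the idea your proposal is missing.
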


By \cite[Proposition 6.3]{Likleinfour}, any indecomposable $B$-module $M$ has a vertex-source
pair $(Q,U)$ satisfying the hypotheses of Theorem \ref{thm2}. 
There are  examples where the inclusion 
$\CV_Q(U)\subseteq $  $\CV_Q(kX\tenkQ U)$ is proper, and so tensoring $U$ by 
$kX$ over $kQ$  in  Theorem \ref{thm2} is essential.  See Example \ref{ex2} below.

\begin{Remark} \label{rem1}
The block module variety $\CV_B(M)$ is defined  in \cite[4.1]{Linvar} by using an
injective algebra homomorphism from the block cohomology $H^*(B)$ to the Hochschild
cohomology of $\HH^*(B)$. Composed with the canonical algebra homomorphism
$\HH^*(B)\to \Ext^*_B(M,M)$ induced by the functor $-\tenB M$ this yields an algebra
homomorphism $H^*(B)\to \Ext^*_B(M,M)$, with kernel denoted $I_B(M)$. 
The variety $\CV_B(M)$ is then defined as the
closed homogeneous subvariety of $\CV_B$ consisting of the maximal ideals of $H^*(B)$
which contain $I_B(M)$. By results of Kawai \cite[Corollary 1.2]{Kaw} and the author
\cite[Theorem 2.1]{LinQuillen}, this definition of $\CV_B(M)$ is equal to $r^*_P(\CV_P(iM))$
whenever $i$ is an actual source idempotent. As mentioned earlier,  
Lemma \ref{almostsourcevariety} implies  that this identification of
$\CV_B(M)$ remains unchanged for almost source idempotents. 
\end{Remark}

The strategy to prove Theorem \ref{thm1} is as follows. We first observe that it suffices
to prove Theorem \ref{thm1} for $Q=P$. We then apply the Quillen stratification 
for block module varieties from \cite{LinQuillen} and adapt the steps in the proof
of Kawai's result \cite[Proposition 5.2]{Kaw}  to the situation at hand.

\section{Background on characteristic bisets} %%%%%%%%%%%%%%%%%%

 \begin{Definition}[{cf. \cite[Definition 3.3.(1)]{LiMa}}]    \label{stablemoduleDef}
 Let $\CF$ be a saturated fusion system on a finite $p$-group $P$. 
 A $kP$-module $U$ is called {\em $\CF$-stable} if for every subgroup $Q$ of
 $P$ and every morphism $\varphi : Q\to P$ we have an isomorphism of $kQ$-modules
 ${_\varphi{U}}\cong$ $\Res^P_Q(U)$. Here ${_\varphi{U}}$ is the $kQ$-module which is
 equal to $U$ as a $k$-vector space, with $u\in$ $Q$ acting as $\varphi(u)$. 
 \end{Definition}
 
 For $Q$ a subgroup of a finite group $P$ and $\varphi : Q\to P$ an injective group 
homomorphism, we denote by $P\times_{(Q,\varphi)} P$ the transitive $P$-$P$-biset 
which is the quotient of $P\times P$ by the equivalence relation 
$(uv,w)\sim (u,\varphi(v)w)$, where  $u$, $w\in P$ and $v\in Q$. The stabiliser of the
image of $(1,1)$ in the set $P\times_{(Q,\varphi)} P$, regarded as a $P\times P$-set,
is the twisted diagonal subgroup $\Delta_\varphi(Q)=$ $\{(u,\varphi(u))\ |\ u\in Q\}$.
In particular, $P$ acts freely on the left and on the right of the set 
$P\times_{(Q,\varphi)} P$, and the cardinality of this set is  $|P|\cdot|P:Q|$. 

\begin{Proposition}[{\cite[Proposition~2.5]{BLO}}]  \label{BLObiset}
Let $\CF$ be a saturated fusion system on a finite $p$-group $P$. There is
a finite $P$-$P$-biset $X$ with the following properties:
\begin{enumerate}
\item[{\rm (i)}]  Every transitive $P$-$P$-subbiset of $X$ is of the form
$P \times_{(Q, \varphi)} P$ for some subgroup $Q$ of $P$ and some
$\varphi\in\Hom_\CF(Q,P)$.
\item[{\rm (ii)}]  $|X|/|P|$ is prime to $p$. 
\item[{\rm (iii)}]  For any subgroup $Q$ of $P$ and any $\varphi : Q\rightarrow P$
we have an isomorphism of $Q$-$P$-bisets ${_\varphi X}\cong {_Q X}$
and an isomorphism of $P$-$Q$-bisets $X_\varphi \cong X_Q$.
\end{enumerate}
\end{Proposition}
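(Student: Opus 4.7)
The approach I would take is to work in the Burnside ring of finite $P$-$P$-bisets on which both $P$-actions are free. Such bisets decompose canonically as disjoint unions of transitive pieces $P\times_{(Q,\varphi)}P$, parametrized by pairs $(Q,\varphi)$ with $Q\leq P$ and $\varphi:Q\to P$ injective, taken up to $P\times P$-conjugacy; and any such biset is determined up to isomorphism by its fixed-point counts $|Y^{\Delta_\psi(R)}|$ on twisted diagonal subgroups. Accordingly, I would search for a non-negative integer combination
\[X=\coprod_{(Q,\varphi)} n_{(Q,\varphi)}\cdot\bigl(P\times_{(Q,\varphi)}P\bigr),\]
indexed by pairs with $\varphi\in\Hom_\CF(Q,P)$, and translate the stability condition (iii) into a system of linear equations in the multiplicities $n_{(Q,\varphi)}$.

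The first step is to recast (iii) as the fixed-point condition $|X^{\Delta_\psi(R)}|=|X^{\Delta(R)}|$ for every $R\leq P$ and every $\psi\in\Hom_\CF(R,P)$; this follows from the Burnside-ring characterization of bisets via the standard double-coset formula for the action of $\Delta_\psi(R)$ on each transitive piece, using that the stabilizer of the canonical point of $P\times_{(Q,\varphi)}P$ in $P\times P$ is exactly $\Delta_\varphi(Q)$. The second step is to solve the resulting system by downward induction along the poset of $\CF$-conjugacy classes of subgroups of $P$: start at the top $Q=P$ with one summand for each class in $\Out_\CF(P)$ (so that the fixed-point counts at $\Delta_\psi(P)$ and $\Delta(P)$ agree); then at each smaller class, pick a fully $\CF$-normalized representative $Q$ and add transitive bisets $P\times_{(Q,\varphi)}P$ with $\varphi\in\Hom_\CF(Q,P)$ to correct the fixed-point counts introduced by the higher strata.

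The main obstacle is to show that these corrections can always be achieved with non-negative integer multiplicities, and this is precisely where the saturation of $\CF$ enters decisively. For a fully $\CF$-normalized subgroup $Q\leq P$, the saturation axioms imply that $\Aut_P(Q)$ is a Sylow $p$-subgroup of $\Aut_\CF(Q)$, so $[\Aut_\CF(Q):\Aut_P(Q)]$ is prime to $p$, and the extension axiom supplies enough $\CF$-morphisms $Q\to P$ to absorb the required fixed-point profile. An orbit-counting argument under the natural $\Aut_\CF(Q)\times N_P(Q)$-action on $\Hom_\CF(Q,P)$ then realizes the necessary correction as a non-negative integer combination of orbits of such morphisms, yielding the multiplicities $n_{(Q,\varphi)}$.

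Finally, condition (ii) comes out as a by-product of the construction: $|X|/|P|=\sum_{(Q,\varphi)} n_{(Q,\varphi)}\,[P:Q]$, and the top stratum $Q=P$ contributes exactly $|\Out_\CF(P)|$, which is prime to $p$ by saturation (since $\Out_P(P)=1$ is Sylow in $\Out_\CF(P)$), while every summand with $Q<P$ contributes a multiple of $p$. Hence $|X|/|P|$ is prime to $p$, and the biset produced by the construction satisfies all three properties.
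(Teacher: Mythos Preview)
The paper does not give its own proof of this proposition; it is quoted as \cite[Proposition~2.5]{BLO} and used as background. Your sketch is essentially the argument of Broto--Levi--Oliver: encode bifree $P$-$P$-bisets via their fixed points on twisted diagonals, rewrite the stability condition (iii) as equalities $|X^{\Delta_\psi(R)}|=|X^{\Delta(R)}|$ for $\psi\in\Hom_\CF(R,P)$, and build $X$ by downward induction on $\CF$-conjugacy classes, using the Sylow axiom at a fully normalized representative to guarantee non-negative integral corrections. The final count $|X|/|P|\equiv|\Out_\CF(P)|\pmod p$ is also the BLO argument. So your proposal is correct and matches the source the paper cites; there is nothing to compare against in the present paper itself.

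One small point of precision: in (iii) the morphism $\varphi$ is tacitly meant to lie in $\Hom_\CF(Q,P)$ (otherwise the condition could not hold for a nontrivial $\CF$), and you have read it that way. Also, your inductive step is stated somewhat loosely (``an orbit-counting argument \ldots\ realizes the necessary correction''); in BLO this is made precise by showing that the deficit at level $Q$ is constant on $\Hom_\CF(Q,P)$ and divisible by the appropriate normalizer index, which is what forces the multiplicities to be non-negative integers. If you ever need to write this out in full, that is the place where the work lies.
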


Here ${_\varphi X}$ is the $Q$-$P$-biset which as a right $P$-set is equal to $X$, with
$u\in$ $Q$ acting on the left as $\varphi(u)$ on $X$. The $P$-$Q$-biset $X_\varphi$ is
defined analogously.  The properties (i) and (iii) of $X$ in Proposition \ref{BLObiset}
do not change if we replace $X$ by a disjoint union of
finitely many copies of $X$, and therefore there exists a biset $X$
satisfying the properties (i), (iii) and (ii) replaced by the stronger requirement
$|X|/|P| \equiv 1\ (\mod\ p)$. 
Since a $P$-$P$-biset of the form  $P\times_{(Q,\varphi)} P$ has cardinality 
$|P|\cdot|P:Q|$, it follows that  
$$|X|/|P|\equiv n(X)\ (\mod\ p)\ ,$$
where $n(X)$ is the number of $P$-$P$-orbits in $X$ 
of length $|P|$.  A $P$-$P$-biset $X$ satisfying Proposition \ref{BLObiset} is called an
{\em $\CF$-characteristic biset}. (Some authors use this term for bisets satsisfying
some additional properties; see e. g. \cite[Definition 2.1]{BarGel}.)
Given two $P$-$P$-bisets $X$, $X'$, we denote by $X\times_P X'$ the quotient of the
set $X\times X'$ by the equivalence relation $(xu,x')\sim (x,ux')$, where $x\in X$, $x'\in X'$,
and $u\in P$. The left and right action of $P$ on $X\times_P X'$ is induced by the left and
right action of $P$ on $X$ and $X'$ respectively. We have an obvious $kP$-$kP$-bimodule
isomorphism $kX \tenkP kX' \cong k(X\times_P X')$.  
We record some elementary   observations for future reference.

\begin{Lemma} \label{BLObisetLemma}
Let $\CF$ be a saturated fusion system on a finite $P$-group. Let $X$, $X'$ be
$\CF$-characteristic $P$-$P$-bisets, and let $Y$ be a $P$-$P$-biset satisfying the
properties (i) and (ii) of Proposition \ref{BLObiset}. Then the $P$-$P$-bisets
$X\times_P X'$ and $X\times_P Y\times_P X'$ are $\CF$-characteristic bisets.
Moreover, the $P$-$P$-bisets $X$ and $X'$ are isomorphic to subbisets of 
$X\times_P X'$.
\end{Lemma}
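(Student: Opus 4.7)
The plan is to verify the three defining conditions (i), (ii), (iii) of Proposition~\ref{BLObiset} for $X \times_P X'$ and for $X \times_P Y \times_P X'$, and then to deduce the sub-biset statement. Condition (ii) is immediate from the cardinality identity $|X \times_P X'| = |X| \cdot |X'|/|P|$, which gives $|X \times_P X'|/|P| = (|X|/|P|)(|X'|/|P|)$, a product of integers prime to $p$; an extra factor $|Y|/|P|$ handles the three-fold product. Condition (iii) follows by associativity of $\times_P$ and (iii) for the outer factors: for any $\varphi \in \Hom_\CF(Q, P)$, ${_\varphi(X \times_P X')} = {_\varphi X} \times_P X' \cong {_Q X} \times_P X' = {_Q(X \times_P X')}$ by (iii) for $X$, and symmetrically on the right by (iii) for $X'$. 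The same argument accommodates $Y$ inserted in the middle, since (iii) is not required of $Y$.

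For condition (i), decomposing $X$ and $X'$ into their transitive summands reduces the verification to a single bifactor $(P \times_{(Q_1, \varphi_1)} P) \times_P (P \times_{(Q_2, \varphi_2)} P)$ with $\varphi_j \in \Hom_\CF(Q_j, P)$. A standard Mackey double-coset analysis of this tensor product, indexed by $\varphi_1(Q_1) \backslash P / Q_2$, shows that each transitive summand has stabiliser $\Delta_{\psi_x}(R_x)$ where $R_x = Q_1 \cap \varphi_1^{-1}(x Q_2 x^{-1})$ and $\psi_x$ is the composition of $\varphi_1|_{R_x}$, conjugation by $x^{-1}$ inside $P$, and $\varphi_2$. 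Because $\CF$ contains all inner morphisms of $P$, the middle conjugation lies in $\CF$, so $\psi_x \in \Hom_\CF(R_x, P)$. Iterating and invoking condition (i) for $Y$ handles $X \times_P Y \times_P X'$. This Mackey calculation is the main technical step, but it is routine and is the place where the fusion-theoretic content enters.

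For the sub-biset claim, the count recalled just before the lemma gives $|X'|/|P| \equiv n(X') \pmod{p}$, where $n(X')$ counts transitive summands of $X'$ of cardinality $|P|$. By condition (ii), $n(X') \geq 1$; and since $P \times_{(Q, \varphi)} P$ has cardinality $|P| \cdot [P : Q]$, such a summand must be of the form $P \times_{(P, \alpha)} P$ with $\alpha \in \Aut_\CF(P)$. A short simplification of $X \times_P (P \times_{(P, \alpha)} P)$ via the $P$-equivalence in the right factor identifies it with a right twist $X_\beta$ for some $\beta \in \Aut_\CF(P)$ (concretely $\beta = \alpha^{-1}$), and condition (iii) for $X$ yields $X_\beta \cong X_P = X$; this realises $X$ as a sub-biset of $X \times_P X'$. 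The symmetric argument, using a transitive summand $P \times_{(P, \gamma)} P$ of $X$ with $\gamma \in \Aut_\CF(P)$ and condition (iii) for $X'$ on the left, realises $X'$ as a sub-biset.
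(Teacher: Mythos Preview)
Your proof is correct and follows essentially the same route as the paper's: a Mackey double-coset decomposition for (i), inheritance of (iii) from the outer factors, and the existence of a length-$|P|$ orbit for the sub-biset claim. The only cosmetic differences are that the paper verifies (ii) via the multiplicativity $n(X\times_P X')=n(X)\cdot n(X')$ of the count of length-$|P|$ orbits rather than your direct cardinality identity, and the paper asserts (forward-referencing Lemma~\ref{FstablemoduleLemma}(i)) that $X$ and $X'$ already contain an \emph{untwisted} orbit isomorphic to $P$, whereas you only extract an orbit $P\times_{(P,\alpha)}P$ from (i) and (ii) and then untwist via (iii)---which is exactly how the paper proves Lemma~\ref{FstablemoduleLemma}(i) anyway.
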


\begin{proof}
Let $Q$, $R$ be subgroups of $P$ and $\varphi : Q\to P$ and $\psi : R\to P$ morphisms
in $\CF$. Using the double coset decomposition $\varphi(Q)\backslash P/R$, an easy 
verification shows that $(P\times_{(Q,\varphi)} P)\times_P (P\times_{(R,\psi)} P)$ is
a unions of $P$-$P$-orbits of the form $P\times_{(S,\tau)} P$ for some subgroup
$S$ of $P$ and some morphism $\tau :S\to P$. This implies that the bisets
$X\times_P X'$ and $X\times_P Y \times_P X'$ satisfy property (i) of Proposition
\ref{BLObiset}. One easily checks that $n(X\times_P X') = n(X)\cdot n(X')$ and the
analogous statement for $X\times_P Y\times_P X'$, which implies that 
the bisets $X\times_P X'$ and $X\times_P Y \times_P X'$ satisfy property (ii) of 
Proposition \ref{BLObiset}, and clearly these two sets inherit property (iii)
of Proposition \ref{BLObiset} from $X$ and $X'$. The last statement follows from the
fact that $X$ and $X'$ have an orbit isomorphic to $P$ as a $P$-$P$-biset.
\end{proof}

\begin{Lemma} \label{FstablemoduleLemma}
Let $\CF$ be a saturated fusion system on a finite $p$-group $P$, and let $X$ be an
$\CF$-characteristic $P$-$P$-biset. Let $U$ be a finitely generated $kP$-module. 

\begin{enumerate}
\item[{\rm (i)}]
The $P$-$P$-biset $X$ has an orbit isomorphic to $P$ as a $P$-$P$-biset.

\item[{\rm (ii)}]
The $kP$-module $kX\tenkP U$ has a direct summand isomorphic to $U$.

\item[{\rm (iii)}]
Let $Q$, $R$ be  subgroups of $P$, let $S$ be a subgroup of $Q$, and let $\varphi : S\to R$
be a morphism in $\CF$. Set $Y= Q\times_{(S,\psi)} R$. Then $Y\times_R X \cong$
$Q\times_S X$ as $Q$-$P$-bisets, and $kY\tenkR kX \cong kQ\ten_{kS} kX$ as
$kQ$-$kP$-bimodules. 

\item[{\rm (iv)}]
The $kP$-module $kX\tenkP U$ is $\CF$-stable.

\item[{\rm (v)}] 
For any subgroup $Q$ of $P$ and any morphism $\varphi : Q\to P$ in $\CF$ the
$kQ$-module ${_\varphi{U}}$ is isomorphic to a direct summand of $\Res^P_Q(kX\tenkP U)$.

\item[{\rm (vi)}]
If $U$ is $\CF$-stable, then any  indecomposable direct summand of the $kP$-module 
$kX\tenkP U$ is isomorphic to a direct summand of $kP\tenkQ U$ for some subgroup 
$Q$ of $P$.
\end{enumerate}
\end{Lemma}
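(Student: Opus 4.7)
The plan is to handle the six parts in an order respecting their logical dependencies: (i) gives (ii); (iii) underlies (iv) and (v); and (vi) is extracted from the orbit decomposition of $X$ together with (i). The essential inputs throughout are properties (i) and (iii) of Proposition \ref{BLObiset}, together with the congruence $|X|/|P| \equiv n(X)\ (\mod\ p)$ recorded above.

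For (i), since $|X|/|P|$ is prime to $p$, the congruence forces $n(X) \geq 1$, so $X$ has at least one orbit of length $|P|$. By property (i) of Proposition \ref{BLObiset}, such an orbit is of the form $P \times_{(P,\varphi)} P$ with $\varphi \in \Aut_\CF(P)$, and a direct computation identifies it with the set $P$ carrying the standard right action and the left action twisted by $\varphi$. Applying property (iii) of Proposition \ref{BLObiset} to the $\CF$-automorphism $\varphi^{-1}$ of $P$ yields an isomorphism of $P$-$P$-bisets ${}_{\varphi^{-1}} X \cong X$, under which this orbit corresponds to an orbit of $X$ isomorphic to $P$ with both its standard actions; this proves (i). Statement (ii) is then immediate: $kP$ is a bimodule summand of $kX$, so $U \cong kP \tenkP U$ is a summand of $kX \tenkP U$.

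Statement (iii) is a direct tensor-product manipulation. Representing elements of $Y \times_R X$ by triples $(q,r,x) \in Q \times R \times X$, the $R$-tensor relation absorbs $r$ into $x$, after which the remaining $Y$-relation $(qs,1,x) \sim (q,1,\varphi(s)x)$ realises $Y \times_R X$ as $Q \times_S ({}_\varphi X)$, where $S$ acts on $X$ via $\varphi$. Property (iii) of Proposition \ref{BLObiset} applied to $\varphi : S \to P$ then supplies an isomorphism ${}_\varphi X \cong {}_S X$ of $S$-$P$-bisets, converting the target to $Q \times_S X$; the bimodule version follows by $k$-linearisation. Parts (iv) and (v) are then formal consequences at the bimodule level: both $\Res^P_Q(kX \tenkP U)$ and ${}_\varphi(kX \tenkP U)$ arise by applying the corresponding operation to the left $kP$-action on the bimodule $kX$, and by property (iii) of Proposition \ref{BLObiset} the $kQ$-$kP$-bimodules ${}_\varphi kX$ and ${}_Q kX$ are isomorphic, giving (iv). Since by (ii) $U$ is a summand of $kX \tenkP U$ as $kP$-modules, its twist ${}_\varphi U$ is a summand of ${}_\varphi(kX \tenkP U) \cong \Res^P_Q(kX \tenkP U)$, yielding (v).

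For (vi), the plan is to decompose $X$ using property (i) of Proposition \ref{BLObiset} into orbits of the form $P \times_{(Q,\varphi)} P$ with $Q \leq P$ and $\varphi \in \Hom_\CF(Q, P)$. A short calculation gives $k(P \times_{(Q,\varphi)} P) \tenkP U \cong kP \tenkQ ({}_\varphi U)$, and the $\CF$-stability of $U$ gives ${}_\varphi U \cong \Res^P_Q U$ as $kQ$-modules, so each summand of the resulting decomposition of $kX \tenkP U$ has the form $kP \tenkQ U$; the claim then follows by Krull--Schmidt. The one step that is more than routine is part (i), where property (iii) of Proposition \ref{BLObiset} is essential to absorb the automorphism twist in the length-$|P|$ orbit; everything else reduces to formal manipulations once (iii) of Proposition \ref{BLObiset} is available.
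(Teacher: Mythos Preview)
Your proof is correct and follows essentially the same approach as the paper's own proof: both derive (i) from the congruence $|X|/|P|\equiv n(X)\pmod p$ together with the stability property (iii) of Proposition~\ref{BLObiset} to untwist the automorphism on the length-$|P|$ orbit, deduce (ii) immediately, obtain (iii)--(v) from the left-stability property ${}_\varphi X\cong {}_S X$, and extract (vi) from the orbit decomposition of $X$ combined with the $\CF$-stability of $U$. Your write-up is somewhat more explicit (notably in the biset computation for (iii) and in spelling out the untwisting in (i)), but there is no substantive difference in strategy.
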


\begin{proof}
Since $|X|/|P|$ is prime to $p$ by Proposition \ref{BLObiset} (ii),  it follows that $X$ has 
an orbit of length $|P|$.  By Proposition \ref{BLObiset} (i), such an orbit is isomorphic to
${_\varphi{P}}$ for some $\varphi\in$ $\Aut_\CF(P)$. It follows from Proposition 
\ref{BLObiset} (iii) that $X$ has also an orbit isomorphic to $P$. This shows (i).
It follows from (i) that $kX$ has a direct summand isomorphic to $kP$ as a
$kP$-$kP$-bimodule, which implies (ii). The statements (iii) and (iv) follow from
Proposition \ref{BLObiset} (iii). Since $U$ is isomorphic to a direct summand of
$kX\tenkP U$ as a $kP$-module, it follows that ${_\varphi{U}}$ is isomorphic to a direct 
summand  of ${_\varphi{kX}\tenkP U}\cong \Res^P_Q(kX\tenkP U)$ as a $kQ$-module, 
where the last isomorphism uses the fusion stability property from Proposition
\ref{BLObiset} (iii). This shows (v). By Proposition \ref{BLObiset} (i), every indecomposable
direct summand of $kX\tenkP U$ is isomorphic to a direct summand of
$kP\tenkQ {_\varphi{U}}$ for some subgroup $Q$ of $P$ and some morphism 
$\varphi : Q\to P$ in $\CF$. Since $U$ is assumed to be $\CF$-stable, we have
$kP\tenkQ {_\varphi{U}}\cong$ $kP\tenkQ U$. Statement (vi) follows. 
\end{proof}

\section{Background on block cohomology varieties} %%%%%%%%%%%%%%%%%%%

For general background on cohomology varieties see 
\cite[Section 2.25ff]{Ben84}, \cite[Chapter 5]{BenII}, \cite[Chapter 9]{CTVZ}, 
and \cite[Chapter 8]{Evens}. 
We need the following well-known facts.

 \begin{Proposition}[{\cite[Propositions 8.2.1, 8.2.4 ]{Evens}, \cite[Theorem 2.26.9]{Ben84}}]
 \label{varProp}
 For any subgroup $Q$  of  a finite group $P$, any finitely generated $kP$-module 
 $U$ and any finitely  generated $kQ$-module  $V$ we have
 $$(\res^P_Q)^*(\CV_Q(\Res^P_Q(U))) \subseteq \CV_P(U)\ ,$$
 $$(\res^P_Q)^*(\CV_Q(V)) = \CV_P(\Ind^P_Q(V))\ ,$$
 $$\CV_P(\Ind^P_Q(\Res^P_Q(U))) \subseteq \CV_P(U)\ .$$
 \end{Proposition}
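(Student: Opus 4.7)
All three assertions reduce to tracking how the action of cohomology on Ext algebras behaves under restriction and induction of modules. The key observation is that for $\zeta\in H^n(P)=\Ext^n_{kP}(k,k)$ and any $kP$-module $U$, the class $\zeta\cdot\mathrm{id}_U\in \Ext^n_{kP}(U,U)$ is obtained by tensoring $\zeta$ with $\mathrm{id}_U$; since tensoring commutes with restriction of scalars, the image of $\zeta\cdot\mathrm{id}_U$ in $\Ext^n_{kQ}(\Res U,\Res U)$ is $(\res^P_Q\zeta)\cdot\mathrm{id}_{\Res U}$. Hence $\res^P_Q(I_P(U))\subseteq I_Q(\Res^P_Q U)$, which on passing to maximal ideal spectra gives (i).

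For (ii), the plan is to pin down $I_P(\Ind V)$ via the Eckmann--Shapiro adjunction, which furnishes a natural isomorphism $\Ext^*_{kP}(\Ind V,\Ind V)\cong \Ext^*_{kQ}(V,\Res\Ind V)$ under which $\mathrm{id}_{\Ind V}$ corresponds to the unit $\eta\colon V\to \Res\Ind V$, $v\mapsto 1\ten v$, and the $H^*(P)$-action on the left corresponds to the $H^*(Q)$-action on the right via $\res^P_Q$. By Mackey, the $kQ$-summand of $\Res^P_Q\Ind^P_Q V$ indexed by the trivial double coset is precisely $V$, so $\eta$ is a split monomorphism. Post-composition with $\eta$ therefore embeds $\Ext^*_{kQ}(V,V)$ as an $H^*(Q)$-submodule of $\Ext^*_{kQ}(V,\Res\Ind V)$ and sends $\mathrm{id}_V$ to $\eta$. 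It follows that $(\res^P_Q\zeta)\cdot\eta=0$ if and only if $(\res^P_Q\zeta)\cdot\mathrm{id}_V=0$, so
\begin{equation*}
I_P(\Ind V)=(\res^P_Q)^{-1}(I_Q(V)).
\end{equation*}
One inclusion in (ii) is now immediate. For the other, one uses that $H^*(Q)=\Ext^*_{kP}(k[P/Q],k)$ is a finitely generated $H^*(P)$-module via $\res^P_Q$ (Evens), so $\res^P_Q$ is integral, and Cohen--Seidenberg going-up lifts every maximal ideal of $H^*(P)$ containing $(\res^P_Q)^{-1}(I_Q(V))$ to a maximal ideal of $H^*(Q)$ containing $I_Q(V)$.

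Statement (iii) is a formal consequence: apply (ii) to $V=\Res^P_Q U$ and combine with (i). The main obstacle is the nontrivial half of (ii): translating the ideal identity $I_P(\Ind V)=(\res^P_Q)^{-1}(I_Q(V))$ into equality of varieties requires both the Mackey splitting of $\Res\Ind V$ onto its trivial double-coset summand (to bridge $\mathrm{id}_V$ and $\eta$) and the integrality of $\res^P_Q$ (to invoke going-up); the remaining steps are essentially formal.
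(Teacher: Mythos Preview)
Your argument is correct and is essentially the standard proof one finds in the cited references. Note, however, that the paper does not supply its own proof of this proposition: it is stated as background with citations to \cite[Propositions 8.2.1, 8.2.4]{Evens} and \cite[Theorem 2.26.9]{Ben84}, so there is nothing in the paper to compare your argument against beyond those external sources.
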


We adopt the following abuse of notation: if $Q$ is a subgroup of a finite group $P$ and
$U$ a finitely generated $kP$-module, then we write $\CV_Q(U)$  instead of 
$\CV(\Res^P_Q(U))$. The third inclusion in Proposition \ref{varProp} is obviously equivalent to
the inclusion
$$\CV_P(kP\tenkQ U) \subseteq \CV_P(U)\ .$$

We briefly review block theoretic background, much of which is  from
 \cite{AlBr}, \cite{BrPuloc}, \cite{Puigpoint}, referring to \cite{LiBookI}, \cite{LiBookII} for an 
 expository account. 
We assume familiarity with relative trace maps, the Brauer homomorphism
(cf.  \cite[Theorem 5.4.1]{LiBookI}), and (local)
pointed groups on $G$-algebras. One useful technical consequence of Puig's version 
\cite[Theorem 5.12.20]{LiBookI} of Green's Indecomposability Theorem 
\cite[Theorem 5.12.3]{LiBookI}  is the following observation.

\begin{Lemma} \label{relprojLemma}
Let $G$ be a finite group, $P$ a $p$-subgroup of $G$, and $i$ a primitive idempotent
in $(kG)^P$. Let $Q$ be a subgroup of $P$ which is maximal such that $\Br_Q(i)\neq 0$.
Then there is a primitive idempotent $j\in i(kG)^Qi$ such that $\Br_Q(j)\neq 0$ and such
that 
$$ikG \cong kP\tenkQ jkG$$ 
as $kP$-$kG$-bimodules.
\end{Lemma}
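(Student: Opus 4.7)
The plan is to realize $ikG$ as the indecomposable $kP$-$kG$-bimodule direct summand of $kG$ corresponding to $i$, use the maximality of $Q$ to see that it is relatively $Q$-projective, and then invoke Puig's version of Green's Indecomposability Theorem to identify it with the induced bimodule. Via the identification of $(kG)^P$ with the centralizer $Z_{kG}(P)$ (equivalently, with the endomorphism ring of $kG$ as a $kP$-$kG$-bimodule, with $v \in (kG)^P$ acting as left multiplication by $v$), a primitive idempotent $i \in (kG)^P$ yields an indecomposable $kP$-$kG$-bimodule direct summand $ikG$ of $kG$. The hypothesis that $Q$ is maximal in $P$ with $\Br_Q(i) \ne 0$ is the standard characterization of $Q$ as a defect group of the point of $P$ on $kG$ containing $i$, so $i \in \Tr^P_Q((kG)^Q)$; equivalently, $ikG$ is a direct summand of $kP \tenkQ ikG$ as a $kP$-$kG$-bimodule.

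Next I would decompose $i = \sum_\mu j_\mu$ in $(kG)^Q$ into pairwise orthogonal primitive idempotents. Each $j_\mu$ satisfies $ij_\mu = j_\mu = j_\mu i$, hence lies in $i(kG)^Q i$ and is primitive there; and $ikG = \bigoplus_\mu j_\mu kG$ is a decomposition into indecomposable $kQ$-$kG$-bimodules (since each $j_\mu (kG)^Q j_\mu$ is local). As $ikG$ is a direct summand of $kP \tenkQ ikG = \bigoplus_\mu kP \tenkQ j_\mu kG$, Krull--Schmidt forces the indecomposable $ikG$ to be an indecomposable direct summand of $kP \tenkQ j_\mu kG$ for at least one $\mu$. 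If $\Br_Q(j_\mu) = 0$, then $j_\mu \in \Tr^Q_R((kG)^R)$ for some proper $R \lneq Q$, which would make $ikG$ relatively $R$-projective as a $kP$-$kG$-bimodule and $i \in \Tr^P_R((kG)^R)$; but $\Br_Q \circ \Tr^P_R = 0$ whenever $R \lneq Q$ (by the double coset/Mackey formula for the relative trace, since $|Q| > |tRt^{-1}|$ rules out $Q \subseteq tRt^{-1}$), contradicting $\Br_Q(i) \ne 0$. Hence $\Br_Q(j_\mu) \ne 0$, and we set $j := j_\mu$.

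To conclude $ikG \cong kP \tenkQ jkG$, it suffices to show that $kP \tenkQ jkG$ is itself indecomposable, since $ikG$ is an indecomposable direct summand. The bimodule $jkG$ is indecomposable over $k(Q \times G^\op)$ because its endomorphism ring is $j(kG)^Q j$, which is local. Choosing a subnormal chain $Q = Q_0 \trianglelefteq Q_1 \trianglelefteq \cdots \trianglelefteq Q_n = P$ in the $p$-group $P$ (available since every subgroup of a $p$-group is subnormal) and iterating the classical Green Indecomposability Theorem along the normal inclusions $Q_i \times G^\op \trianglelefteq Q_{i+1} \times G^\op$ of index $p$ shows that induction preserves indecomposability at each step, so $kP \tenkQ jkG$ is indecomposable. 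This iteration is exactly the content of Puig's version \cite[Theorem 5.12.20]{LiBookI} cited in the excerpt. The main subtlety, and what Puig's theorem packages cleanly, is that the ambient group $P \times G^\op$ is not itself a $p$-group, so Green's theorem cannot be invoked directly on the single inclusion $kQ \otimes kG^\op \hookrightarrow kP \otimes kG^\op$; iterating through a subnormal chain within the $p$-group factor $P$ (leaving the $G^\op$-factor untouched) is the essential technical point.
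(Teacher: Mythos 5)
Your proof is correct and follows exactly the route the paper intends: the lemma is stated there without proof as a consequence of Puig's version of Green's Indecomposability Theorem (\cite[Theorem 5.12.20]{LiBookI}), and your argument --- identifying $Q$ as a defect group of the point containing $i$ so that $i\in\Tr^P_Q((kG)^Q)$, isolating via Krull--Schmidt, Rosenberg's lemma and the Mackey formula a primitive summand $j$ of $i$ in $(kG)^Q$ with $\Br_Q(j)\neq 0$, and proving indecomposability of $kP\tenkQ jkG$ by iterating Green's theorem along a subnormal chain in the $p$-group factor --- is precisely the standard derivation that citation packages. No gaps.
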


Let $G$ be a finite group and $B$ a block of $kG$; that is, $B=kGb$ for some primitive
idempotent $b$ in $Z(kG)$. Thus $b$ is the unit element of $B$, called the block idempotent 
of $B$. Let $P$ be a defect group of $B$; that is, $P$ is a maximal
$p$-subgroup of $G$ such that $kP$ is isomorphic to a direct summand of $B$ as a
$kP$-$kP$-bimodule. Equivalently, $P$ is a maximal $p$-subgroup of $G$ such that
$\Br_P(b)\neq 0$. An idempotent $i\in B^P$ is a source idempotent of $B$ if 
$i$ is a primitive idempotent in the algebra $B^P$ of $P$-fixed points in $B$ with respect to the
conjugation action of $P$ on $B$, such that $\Br_P(i)\neq 0$, where $\Br_P : (kG)^P\to$
$kC_G(P)$ is the Brauer homomorphism . One of the key
properties of a source idempotent $i$ in $B^P$ is that for each
subgroup $Q$ of $P$ there is a unique block idempotent $e_Q$ of $kC_G(Q)$ such that
$\Br_Q(i)e_Q=\Br_Q(i)\neq 0$ (cf. \cite[Theorem 6.3.3]{LiBookII}).  More generally, a
(not necessarily primitive) idempotent $i$ in $B^P$ is called an almost source idempotent
if for each subgroup $Q$ of $P$ there is a unique block idempotent $e_Q$ of $kC_G(Q)$ 
such that $\Br_Q(i)e_Q=\Br_Q(i)\neq 0$. By the above, a source idempotent is an almost
source idempotent. If $i$ is an almost source idempotent in $B^P$, then
$i = i_0 + i_1$ for some source idempotent $i_0$ in $B^P$ and some idempotent  $i_1$ in
$B^P$ which is orthogonal to $i_0$. The local point of $P$ containing $i_0$ is uniquely
determined by $e_P$, hence by $i$. The extra flexibility of the notion of almost source
idempotents is particularly useful if $B$ is the principal block of $kG$, because - as
mentioned earlier -  in that  case the block idempotent $1_B$ is an almost source idempotent.

\medskip
The choice of an almost  source
idempotent $i$ in $B^P$ determines a fusion system $\CF=\CF_B(P)$ on $P$ as follows. 
For $Q$ a subgroup of $P$, denote by $e_Q$ the unique block idempotent of $kC_G(Q)e_Q$
satisfying $\Br_Q(i)e_Q=\Br_Q(i)\neq 0$. The objects
of $\CF$ are the subgroups of $P$. For two subgroups $Q$, $R$ of $P$, a group
homomorphism $\varphi : Q\to R$ is a morphism in $\CF$ if and only if there exists
an element $x\in$ $G$ such that $xQx^{-1}\leq R$, $xe_Qx^{-1}=e_{xQx^{-1}}$, and
$\varphi(u)=$ $xux^{-1}$ for all $u\in$ $Q$. See \cite[Section 8.5]{LiBookII} for more
details on fusion systems of blocks and \cite{Cravenbook} for a general introduction
to fusion systems. By the results in \cite{Pulocsource}, the fusion system $\CF$ of $B$ defined
in this way can be read off the almost source algebra $iBi$ of $B$; see \cite[Theorem 8.7.4]{LiBookII}.
A subgroup $Q$ of $P$ is fully $\CF$-centralised if $|C_P(Q)|\geq |C_P(Q')|$ for any subgroup
$Q'$ of $P$ which is isomorphic to $Q$ in $\CF$.  By \cite[Proposition 8.5.3]{LiBookII}, 
$Q$ is fully $\CF$-centralised if and only if $C_P(Q)$ is a defect group of the block
$kC_G(Q)e_Q$. 

\begin{Definition}[{\cite[Definition 4.1]{Linvar}}]  \label{blockcohomologyDef}
With the notation above, the
 block cohomology $H^*(B)$ is the graded subalgebra of $H^*(P)$ consisting of
all $\zeta\in H^*(P)$ satisfying for every morphism $\varphi : Q\to R$ in $\CF$ the 
equality $\res^P_Q(\zeta) = \res_\varphi(\res^P_R(\zeta))$. Here $\res_\varphi : H^*(R)\to$
$H^*(Q)$ is the map induced by restriction along the injective group homomorphism
$\varphi : Q\to R$.
\end{Definition}

In other words, $H^*(B)$ is the limit of the contravariant  functor on $\CF$ 
sending a subgroup  $Q$ of $P$ to $H^*(Q)$ and a morphism $\varphi : Q\to R$ in $\CF$ to
the induced map $\res_\varphi : H^*(R)\to H^*(Q)$.  If $B$ is the principal block of $kG$,
then $H^*(B)\cong$ $H^*(G)$.  As mentioned in the introduction, 
for $Q$ a subgroup of $P$, we denote by $r_Q : H^*(B)\to H^*(Q)$ the
composition of the inclusion $H^*(B)\to H^*(P)$ and the restriction map $\res^P_Q : H^*(P)
\to H^*(Q)$. 

\begin{Lemma} \label{blockcohomology1}
With this notation, the following hold for every morphism $\varphi : Q\to R$ in $\CF$.

\begin{enumerate}
\item[{\rm (i)}]  We have a commutative diagram of graded algebras
$$\xymatrix{H^*(R) \ar[rr]^{\res_\varphi} & &  H^*(Q) \\
 & H^*(B) \ar[ul]^{r_Q} \ar[ur]_{r_R} & }$$
 and $H^*(B)$ is universal with this property.  
 
 \item[{\rm (ii)}] The diagram (i) induces  a commutative diagram of varieties
 $$ \xymatrix{ \CV_Q \ar[rr]^{\res_\varphi^*} \ar[rd]_{r_Q^*} & & \CV_R \ar[ld]^{r_R^*} \\
 & \CV_B & }$$ 
 
 \item[{\rm (iii)}] 
 This diagram in (ii)  restricts for any finitely generated $kR$-module $W$ to a
 commutative  diagram of  the form
 $$ \xymatrix{ \CV_Q({_\varphi{W}}) \ar[rr]^{\res_\varphi^*} \ar[rd]_{r_Q^*} &
  & \CV_R(W)  \ar[ld]^{r_R^*} \\
 & \CV_B & }$$ 
 \end{enumerate}
 \end{Lemma}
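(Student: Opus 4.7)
The plan is to read off all three parts directly from the definition of $H^*(B)$ as the $\CF$-stable subalgebra of $H^*(P)$ and from the standard functoriality recorded in Proposition \ref{varProp}.

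For part (i), the commutativity of the triangle is a direct translation of Definition \ref{blockcohomologyDef}: an element $\zeta$ of $H^*(P)$ lies in $H^*(B)$ precisely when $\res^P_Q(\zeta)=\res_\varphi(\res^P_R(\zeta))$ for every morphism $\varphi : Q\to R$ in $\CF$, and by construction this equation reads $r_Q(\zeta)=\res_\varphi(r_R(\zeta))$. The universal property follows by observing that $H^*(B)$ is, by definition, the limit of the functor on $\CF$ sending a subgroup $Q$ of $P$ to $H^*(Q)$ and a morphism $\varphi$ to $\res_\varphi$, so that any graded $k$-algebra equipped with compatible maps to the $H^*(Q)$ factors uniquely through $H^*(B)$.

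Part (ii) is obtained by applying to the diagram in (i) the contravariant functor that sends a graded commutative $k$-algebra to its maximal ideal spectrum; this reverses all arrows and preserves commutativity.

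For part (iii), the work amounts to verifying that $\res_\varphi^*$ actually restricts to a map $\CV_Q({_\varphi W})\to\CV_R(W)$, after which commutativity of the restricted triangle is inherited from (ii). To this end I would factor $\varphi : Q\to R$ through its image, writing $\res_\varphi$ as the composite of the restriction map $\res^R_{\varphi(Q)} : H^*(R)\to H^*(\varphi(Q))$ and the isomorphism $H^*(\varphi(Q))\cong H^*(Q)$ induced by $\varphi^{-1}$. Under the resulting identification of $\CV_Q$ with $\CV_{\varphi(Q)}$, the $kQ$-module ${_\varphi W}$ corresponds to $\Res^R_{\varphi(Q)}(W)$, and the first inclusion of Proposition \ref{varProp} then yields
$$(\res^R_{\varphi(Q)})^*(\CV_{\varphi(Q)}(\Res^R_{\varphi(Q)}(W)))\subseteq \CV_R(W)\ ,$$
which translates to the required inclusion $\res_\varphi^*(\CV_Q({_\varphi W}))\subseteq\CV_R(W)$.

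No step here presents a real obstacle; the entire lemma is a bookkeeping exercise unpacking the definitions and invoking Proposition \ref{varProp}. The only mild technical point is the standard factorisation of an $\CF$-morphism as an abstract group isomorphism onto its image followed by an honest subgroup inclusion, so that the inclusion from Proposition \ref{varProp} can be applied in the final step.
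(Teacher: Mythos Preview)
Your proposal is correct and follows essentially the same approach as the paper: (i) is the definition of $H^*(B)$ as a limit, (ii) follows by passing to spectra, and (iii) is the restriction of (ii) to the module subvarieties. The paper's own proof is a three-line sketch that declares (iii) ``an immediate consequence of (ii)''; your version simply makes explicit, via the factorisation of $\varphi$ through its image and the first inclusion of Proposition~\ref{varProp}, why $\res_\varphi^*$ actually carries $\CV_Q({_\varphi W})$ into $\CV_R(W)$, which is the only point that genuinely needs checking.
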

 
 \begin{proof}
 Statement (i) is just a reformulation of the definition of $H^*(B)$ as  the limit of the
 functor $Q\mapsto H^*(Q)$ on $\CF$. Statement (ii) follows from (i) by passing
 to maximal ideal spectra, and (iii) is an immediate consequence of (ii).
 \end{proof}
 
 For $Q$ a subgroup of $P$ and a finitely generated $B$-module $M$ set
 $$\CV_Q^+ = \CV_Q \backslash \cup_R (\res^Q_R)^*(\CV_R)$$
 where in the union $R$ runs over the proper subgroups of $Q$.
 Set $\CV_Q^+(iM) = \CV_Q^+ \cap \CV_Q(iM)$. The idempotent $i$ need no longer
 be primitive in $B^Q$. If $J$ is a primitive decomposition of $i$ in $B^Q$, then
 $iM = \oplus_{j\in J} jM$ is a decomposition of $iM$ as a direct sum of $kQ$-modules.
 Thus we have
 $$\CV_Q(iM) = \cup_{j \in J}\  \CV_Q(jM)$$
 For $j\in J$ set $\CV_Q^+(jM) = \CV_Q(jM)\cap V_Q^+$.
 If $j\in J$ belongs to $\ker(\Br_Q)$, then $jM$ is relatively $R$-projective for some
 proper subgroup $R$ of $Q$, and hence $\CV_Q(jM)\subseteq (\res^Q_R)^*(\CV_R)$ in
 that case. Thus 
 $$\CV_Q^+(iM) = \cup_{j\in J^+} \ \CV_Q^+(jM)$$
 where $J^+$ is the subset of all $j\in$ $J$ satisfying $\Br_Q(j)\neq 0$ (or equivalently, 
 all $j\in J$ belonging to a local point of $Q$ on $iBi$). If $Q$ is fully $\CF$-centralised, 
 then the conjugation action by $N_G(Q,e_Q)$ on $B$ permutes the local points of $Q$ 
 on $iBi$,  and hence induces an action of the group $\Aut_\CF(Q)\cong$ 
 $N_G(Q,e_Q)/C_G(Q)$  on $\CV_Q^+(iM)$ (cf. \cite[Lemma 4.11]{Linvar}).
 
 We define further the following subvarieties of $\CV_B$. We set
 $$\CV_{B,Q}(M) = r^*_Q(\CV_Q(iM))\ ,$$
 $$\CV_{B,Q}^+(M) = r^*_Q(\CV_Q^+(iM)) = \cup_{j\in J^+}\ r^*_Q(\CV_Q(jM))\ .$$
 Denote by $\CE$ a set of representatives of the $\CF$-isomorphism classes of
 fully $\CF$-centralised elementary abelian subgroups of $P$. The block variety
 version    of Quillen's cohomology stratification states the following.
 
 \begin{Theorem}[{cf. \cite[Theorem 4.2]{Linvar}}] \label{Quillenstrat}
 With the notation above, the following hold.
 
 \begin{enumerate}
 \item[{\rm (i)}]
 The variety  $\CV_B(M)$ is a disjoint union
 $$\CV_B(M) = \cup_{E\in \CE}\ \CV_{B,E}^+(M)\  .$$

 \item[{\rm (ii)}]
 For each $E\in$ $\CE$, the group $\Aut_\CF(E)$ acts on the variety 
 $\CV_E^+(iM)$ and the  map $r^*_E$ induces an inseparable isogeny
 $\CV_E^+(iM)/\Aut_\CF(E) \to \CV_{B,E}^+(M)$.
 \end{enumerate}
 \end{Theorem}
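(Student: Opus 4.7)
The strategy is to transfer the classical Quillen stratification for $\CV_P(iM)$ across the map $r_P^*$, using that $\CV_B(M)=r_P^*(\CV_P(iM))$. First I would invoke the classical Quillen stratification: $\CV_P(iM)$ decomposes as a disjoint union, over $P$-conjugacy classes of elementary abelian $p$-subgroups $E\leq P$, of the strata $(\res^P_E)^*(\CV_E^+(iM))$; moreover $(\res^P_E)^*$ induces an inseparable isogeny $\CV_E^+(iM)/W_P(E)\to (\res^P_E)^*(\CV_E^+(iM))$, where $W_P(E)=N_P(E)/C_P(E)$. Applying $r_P^*$ and using the factorisation $r_P^*\circ(\res^P_E)^*=r_E^*$ from Lemma \ref{blockcohomology1}(i) yields
$$\CV_B(M)=\bigcup_{[E]_P}r_E^*(\CV_E^+(iM)),$$
indexed by $P$-conjugacy classes $[E]_P$.

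Next I would collapse $P$-conjugacy classes into $\CF$-isomorphism classes and reduce to fully $\CF$-centralised representatives, which is possible since every elementary abelian subgroup of $P$ is $\CF$-isomorphic to a fully $\CF$-centralised one. For any $\CF$-isomorphism $\varphi\colon E'\to E$, Lemma \ref{blockcohomology1}(iii) gives $r_{E'}^*=r_E^*\circ\res_\varphi^*$, so the image under $r_{E'}^*$ of $\CV_{E'}^+(iM)$ is contained in $r_E^*(\CV_E^+(iM))$. Combined with the permutation action of $\Aut_\CF(E)\cong N_G(E,e_E)/C_G(E)$ on the local points $j\in J^+$ of $E$ on $iBi$, which permutes the components $\CV_E^+(jM)$ among themselves, this gives the decomposition $\CV_B(M)=\bigcup_{E\in\CE}\CV_{B,E}^+(M)$ asserted in (i). The inseparable-isogeny assertion in (ii) would then follow by composing the classical isogeny $\CV_E^+(iM)/W_P(E)\to(\res^P_E)^*(\CV_E^+(iM))$ with $r_P^*$ and taking the further quotient by those elements of $\Aut_\CF(E)$ not realised by $W_P(E)$; finiteness and universal injectivity are inherited because $H^*(P)$ is a finitely generated module over $H^*(B)$.

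The main obstacle is the disjointness in (i) for $E,E'\in\CE$ which are not $\CF$-isomorphic. The plan is to argue by contradiction: suppose $\mathfrak{m}\in\CV_{B,E}^+(M)\cap\CV_{B,E'}^+(M)$. Lift $\mathfrak{m}$ along $r_E^*$ and $r_{E'}^*$ to $\mathfrak{n}\in\CV_E^+(iM)$ and $\mathfrak{n}'\in\CV_{E'}^+(iM)$, and further along $(\res^P_E)^*$ and $(\res^P_{E'})^*$ to points $\bar{\mathfrak{n}},\bar{\mathfrak{n}}'\in\CV_P(iM)$. Since $H^*(B)$ is by definition the limit of $H^*(-)$ over $\CF$, the fibers of $r_P^*$ are controlled up to inseparable closure by $\CF$-morphisms on $P$, so $\bar{\mathfrak{n}}$ and $\bar{\mathfrak{n}}'$ lie in the same $\CF$-orbit. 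The classical Quillen stratification then forces the corresponding $\CF$-morphism to send a $P$-conjugate of $E$ into a $P$-conjugate of $E'$, yielding an $\CF$-isomorphism $E\to E'$, a contradiction. Making precise the assertion that fibers of $r_P^*$ correspond to $\CF$-orbits of maximal ideals in $\CV_P$ is the technical crux and rests on the description of $H^*(B)$ as the $\CF$-stable subalgebra of $H^*(P)$ given in Definition \ref{blockcohomologyDef}.
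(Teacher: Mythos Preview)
The paper does not give a proof of this theorem. It is stated as background, with the citation ``cf.\ \cite[Theorem 4.2]{Linvar}'' (and implicitly \cite{LinQuillen}), and is followed only by the remark that the decomposition in (i) is independent of the choice of $\CE$. So there is no proof in the paper to compare your proposal against; the result is imported wholesale from the literature.

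That said, a brief comment on your sketch. The overall strategy---push the classical Quillen stratification of $\CV_P(iM)$ forward along $r_P^*$ and then reorganise $P$-conjugacy classes into $\CF$-isomorphism classes---is indeed the shape of the argument in the cited references. But one step in your outline is not justified as written: from Lemma~\ref{blockcohomology1}(iii) you get $r_{E'}^*(\CV_{E'}({}_{\varphi}W)) = r_E^*(\CV_E(W))$ for a $kE$-module $W$, not $r_{E'}^*(\CV_{E'}^+(iM)) \subseteq r_E^*(\CV_E^+(iM))$. The latter would need ${}_{\varphi}(iM)\cong \Res^P_{E'}(iM)$ as $kE'$-modules, i.e.\ $\CF$-stability of $iM$, which is exactly what is \emph{not} assumed in this paper (and is the whole point of Theorem~\ref{thm1}). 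The actual argument in \cite{Linvar, LinQuillen} works instead at the level of local pointed groups: one shows that every local point of $E'$ on $B$ is $G$-conjugate to a local point of a fully $\CF$-centralised $E$ contained in $P_\gamma$, and uses this conjugacy (rather than stability of $iM$) to match up the pieces $\CV_{E'}^+(jM)$. Your disjointness argument has the same gap: the assertion that ``fibers of $r_P^*$ are controlled up to inseparable closure by $\CF$-morphisms on $P$'' is the content of the block-theoretic Quillen stratification for $\CV_B$ itself, so invoking it here is circular.
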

 
 The decomposition in Theorem \ref{Quillenstrat} (i)  does not depend on the choice
 of $\CE$; this follows for instance from \cite[Lemma 4.7]{Linvar}. 
 
 \section{Almost source idempotents and fusion stable bisets} %%%%%%%%%%%%%%%%%%%%%%
 
 Let $G$ be a finite group, $B$ a block of $kG$, $P$ a defect group of $B$ and $i$ an 
 almost source idempotent in $B^P$, and $\CF$ the fusion system of $B$ on $P$ determined
 by $i$.   Let if $i_0$ be a source  idempotent of $B$ which is contained in $iB^Pi$ (or 
 equivalently, which satisfies  $i_0i=i_0=ii_0$).
 
 As mentioned above, by \cite[Corollary 1.2]{Kaw} or
 \cite[Theorem 2.1]{LinQuillen}, the block variety $\CV_B(M)$ 
 of a finitely generated $B$-module $M$ is equal to $r^*_P(\CV_P(i_0M))$,  
 The next Lemma shows that we may use $i$ to calculate $\CV_B(M)$.
 Note that $i_0$ determines the same fusion system $\CF$ on $P$ because $\CF$
 depends only on the blocks $e_Q$ of $kC_G(Q)$ satisfying $\Br_Q(i)e_Q=\Br_Q(i)\neq 0$,
 for $Q$ any subgroup of $P$.
 
 \begin{Lemma}  \label{almostsourcevariety}
 We have $\CV_P(i_0M) \subseteq \CV_P(iM)$ and 
 $r^*_P(\CV_P(i_0M))=r^*_P(\CV_P(iM)) = \CV_B(M)$.
 \end{Lemma}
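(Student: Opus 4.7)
The strategy is to combine the $kP$-module decomposition $iM = i_0 M \oplus i_1 M$ (with $i_1 = i - i_0$ orthogonal to $i_0$ in $B^P$) with a primitive-decomposition analysis of $i_1$, and to invoke the source-idempotent case as a black box. This decomposition gives $\CV_P(i_0 M) \subseteq \CV_P(iM)$, hence $r_P^*(\CV_P(i_0 M)) \subseteq r_P^*(\CV_P(iM))$. The identity $r_P^*(\CV_P(i_0 M)) = \CV_B(M)$ is the consequence of Kawai \cite[Corollary~1.2]{Kaw} and \cite[Theorem~2.1]{LinQuillen} cited just before the lemma. Thus the task reduces to the reverse inclusion $r_P^*(\CV_P(iM)) \subseteq r_P^*(\CV_P(i_0 M))$.

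Picking a primitive decomposition $i_1 = \sum_l j_l$ in $B^P$ and using $\CV_P(iM) = \CV_P(i_0 M) \cup \bigcup_l \CV_P(j_l M)$, it suffices to show $r_P^*(\CV_P(j_l M)) \subseteq r_P^*(\CV_P(i_0 M))$ for each primitive summand $j_l$ of $i_1$. I would split into two cases. If $\Br_P(j_l) \neq 0$, then since $P$ is a defect group of $B$ and $j_l \leq i$, the idempotent $j_l$ lies in the unique local point of $P$ on $B$ associated to the maximal Brauer pair $(P,e_P)$, which is also the local point containing $i_0$. Hence $j_l = a^{-1} i_0 a$ for some $a \in (B^P)^\times$, and since $a$ commutes with $P$, left multiplication by $a^{-1}$ is a $kP$-module isomorphism $j_l M \cong i_0 M$, so $\CV_P(j_l M) = \CV_P(i_0 M)$. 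If instead $\Br_P(j_l) = 0$, Lemma \ref{relprojLemma} produces a proper subgroup $Q < P$ (maximal with $\Br_Q(j_l) \neq 0$) and a primitive idempotent $j_l' \in j_l B^Q j_l$ with $\Br_Q(j_l') \neq 0$ and $j_l kG \cong kP \tenkQ j_l' kG$ as $kP$-$kG$-bimodules. Tensoring with $M$ over $kG$ yields $j_l M \cong kP \tenkQ j_l' M$ as $kP$-modules, and Proposition \ref{varProp} then gives $r_P^*(\CV_P(j_l M)) = r_Q^*(\CV_Q(j_l' M))$.

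The core remaining task is to show $r_Q^*(\CV_Q(j_l' M)) \subseteq \CV_B(M)$, which I expect to be the main obstacle. I would proceed by induction on $|P|$. A corner-algebra argument shows that $j_l'$ is primitive in $i B^Q i$ and in $B^Q$, and the almost source property of $i$ gives $\Br_Q(j_l') e_Q = \Br_Q(j_l') \neq 0$, so $j_l'$ lies in a local point of $Q$ on $B$ associated to the Brauer pair $(Q, e_Q) \leq (P, e_P)$. The plan is to transfer the problem to the Brauer correspondent block $kC_G(Q) e_Q$ of $kC_G(Q)$, whose defect group is strictly smaller than $P$, and to apply the induction hypothesis there after relating $j_l'$ to a primitive component of $i_0$ in $(i_0 B i_0)^Q$ via Puig's source-algebra theory. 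The delicate point is that several local points of $Q$ can be associated with the non-maximal Brauer pair $(Q, e_Q)$, so $j_l'$ need not lie in the local point supplied by a source summand of $i_0$; resolving this ambiguity, perhaps via further applications of Lemma \ref{relprojLemma} and descent through the Brauer pair poset, completes the argument, and the induction terminates by finiteness of the subgroup lattice.
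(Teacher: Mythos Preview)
Your reduction to primitive summands $j_l$ and the treatment of the case $\Br_P(j_l)\neq 0$ are correct and essentially match the paper's setup. The gap is in your Case~2. The proposed induction via the Brauer correspondent $kC_G(Q)e_Q$ does not work as stated: first, the claim that this block has defect group strictly smaller than $P$ is false in general (if $Q$ is fully $\CF$-centralised its defect group is $C_P(Q)$, which equals $P$ whenever $Q$ is central in $P$); second, $M$ is a $B$-module, not a module over $kC_G(Q)e_Q$, and you give no mechanism for passing $j_l'M$ to a module over the correspondent so that the inductive hypothesis would apply. The vague ``descent through the Brauer pair poset'' does not address either issue.

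The paper avoids induction entirely. The missing idea is this: with $\epsilon$ the local point of $Q$ containing $j_l'$ and $\gamma'$ the point of $P$ containing $j_l$, the pointed group $Q_\epsilon$ is a \emph{defect pointed group} of $P_{\gamma'}$. Since $P_\gamma$ (with $\gamma\ni i_0$) is a defect pointed group of $G$ on $B$, every local pointed group on $B$ is $G$-conjugate to one contained in $P_\gamma$. Thus there is $x\in G$ with ${}^xQ_\epsilon = R'_{\epsilon'}\leq P_\gamma$, and one may choose $j'\in\epsilon'$ inside $i_0B^{R'}i_0$. Conjugation by $x$ gives a morphism $\varphi:Q\to R'$ in $\CF$ (because $\Br_Q(j_l')\in kC_G(Q)e_Q$ and $\Br_{R'}(j')\in kC_G(R')e_{R'}$ force ${}^xe_Q=e_{R'}$), and $j_l'M\cong\res_\varphi(j'M)$ as $kQ$-modules. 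Lemma~\ref{blockcohomology1}(iii) then gives $r^*_Q(\CV_Q(j_l'M))=r^*_{R'}(\CV_{R'}(j'M))$, and since $j'M$ is a summand of $i_0M$ one finishes with Proposition~\ref{varProp}. This single conjugacy step replaces your entire inductive scheme.
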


 \begin{proof}
 Clearly $i_0M$ is a direct summand of $iM$ as a $kP$-module, whence the first inclusion.
 Applying $r^*_P$ yields an inclusion  of varieties 
  $$r^*_P(\CV_P(i_0M))\subseteq r^*_P(\CV_P(iM))\ .$$
 The left side is the block variety $\CV_B(M)$ of $M$, as noted above. 
 The right side is the union of the varieties $r^*_P(\CV_P(i'M))$, where $i'$ runs over a primitive
 decomposition of $i$ in $B^P$. Thus, given a primitive idempotent $i'$ in $iB^P i$ we need
 to show that $r^*_P(\CV_P(i'M))$ is contained in $r^*_P(\CV_P(i_0M))$.
 It follows from  Lemma \ref{relprojLemma}  that 
 $i'M \cong kP \tenkR jM$ for some subgroup $R$ of $P$ and some primitive
 idempotent $j$ in $i'B^P i'$ satisfying $\Br_R(j)\neq 0$.  Thus we have 
 $$r^*_P(\CV_P(i'M)) = r^*_R(\CV_R(jM))\ .$$

 If $\gamma'$ is the point of $P$ on $B$ containing
 $i'$ and $\epsilon$ is the local point of $R$ on $B$ containing $j$, then $R_\epsilon$ is a defect
 pointed group of $P_{\gamma'}$. Denote by $\gamma$ the local point of $P$ on $B$
 containing $i_0$. Then $R_\epsilon$ is $G$-conjugate to a local pointed group contained
 in $P_\gamma$. That is, there is $x\in G$ such that
 $$R'_{\epsilon'} = {^x{R_\epsilon}} \leq P_\gamma\ .$$
 
 Let $j'\in$ $\epsilon'$. Since $R'_{\epsilon'}\leq P_\gamma$ we may choose $j'$ in
 $i_0B^{R'}i_0$. 
 The map $\varphi : R \to R'$ induced by conjugation with $x$ is a morphism in 
 the fusion system $\CF$, because $\Br_R(j)$  and $\Br_{R'}(j')$ are nonzero and
 belong by construction
 to the block algebras $kC_G(R)e_R$ and $kC_G(R')e_{R'}$, respectively, so  we have
 ${^x{e_R}}=$ $e_{R'}$. We clearly have an isomorphism of $kR$-modules 
 $jM \cong \res_\varphi(j'M)$. The commutative diagram in Lemma 
 \ref{blockcohomology1} (iii)  implies that
 $$r^*_R(\CV_R(jM)) = r^*_{R'}(\CV_{R'}(j'M)\ .$$

 Now $j'M$ is a direct summand of $i_0M$ as a $kP$-module, and hence we have
 $$r^*_{R'}(\CV_{R'}(j'M)\subseteq  r^*_{R'}(\CV_{R'}(i_0M))=
 r^*_P((\res^P_{R'})^*(\CV_{R'}(i_0M)))\ .$$
  By Proposition \ref{varProp} this is  contained in $r^*_P(\CV_P(i_0M))$, whence the result.
  \end{proof}
  
  \begin{Lemma} \label{stablebisetvarietyQ}
  Let $Q$ be a subgroup of $P$ and $U$ a finitely generated $kQ$-module.
  Let $X$ and $X'$ be $\CF$-characteristic $P$-$P$-bisets. The following hold.
  \begin{enumerate}
  \item[{\rm (i)}]  We have $\CV_Q(U) \subseteq \CV_Q(kX\tenkQ U)$.
  \item[{\rm (ii)}]  We have $\CV_Q(kX\tenkQ U) = \CV_Q(kX'\tenkQ U)$.
  \item[{\rm (iii)}]   We have $r_Q^*(\CV_Q(U)) = r_Q^*(\CV_Q(kX\tenkQ U))\ .$
  \end{enumerate}
  \end{Lemma}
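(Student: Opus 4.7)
The plan is to handle the three parts of the lemma in order, exploiting the orbit decomposition of $X$ as a $P$-$P$-biset together with the $\CF$-stability results from Lemma \ref{FstablemoduleLemma}.

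For (i), since $X$ contains $P$ as a sub-$P$-$P$-biset (Lemma \ref{FstablemoduleLemma}(i)), $kP\tenkQ U$ is a direct summand of $kX\tenkQ U$ as a left $kP$-module; restricting to $kQ$, a standard Mackey decomposition exhibits $U$ as a $kQ$-module summand, yielding $\CV_Q(U)\subseteq \CV_Q(kX\tenkQ U)$.

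For (ii), set $Y=X\times_P X'$. By Lemma \ref{BLObisetLemma}, $Y$ is $\CF$-characteristic and contains both $X$ and $X'$ as subbisets, so $kX\tenkQ U$ and $kX'\tenkQ U$ are $kP$-module direct summands of $kY\tenkQ U$. For the reverse containment, note that $V:=kX'\tenkQ U=kX'\tenkP \Ind^P_Q(U)$ is $\CF$-stable by Lemma \ref{FstablemoduleLemma}(iv). Decomposing $kX$ into its transitive $P$-$P$-orbit summands via Proposition \ref{BLObiset}(i) and using ${_\varphi V}\cong \Res^P_R V$ identifies each summand of $kX\tenkP V = kY\tenkQ U$ with $\Ind^P_R(\Res^P_R V)$ for some $R\leq P$ and $\varphi\in\Hom_\CF(R,P)$. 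A Mackey decomposition upon restriction to $Q$, combined with Proposition \ref{varProp}, bounds the variety of each summand by $\CV_Q(\Res^P_Q V)$. Summing yields $\CV_Q(kY\tenkQ U)\subseteq \CV_Q(kX'\tenkQ U)$, and reversing the roles of $X$ and $X'$ gives equality.

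For (iii), the inclusion $\subseteq$ is immediate from (i) after applying $r_Q^*$. For $\supseteq$, a double-coset analysis applied to the $P$-$P$-orbits of $X$ from Proposition \ref{BLObiset}(i) shows that each transitive $P$-$Q$-orbit of $X$ has the form $P\times_{(T,\tau)} Q$ with $T\leq Q$ and $\tau : T\to P$ a morphism in $\CF$. As a $kP$-module, the corresponding summand of $kX\tenkQ U$ is $\Ind^P_{\tau(T)}({_{\tau^{-1}}(\Res^Q_T U)})$. The factorization $r_Q^*=r_P^*\circ(\res^P_Q)^*$ and Proposition \ref{varProp} bound the image of its variety in $\CV_B$ by $r_{\tau(T)}^*(\CV_{\tau(T)}({_{\tau^{-1}}(\Res^Q_T U)}))$. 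Lemma \ref{blockcohomology1}(iii), applied to the $\CF$-isomorphism $\tau^{-1}:\tau(T)\to T$, identifies this with $r_T^*(\CV_T(\Res^Q_T U))$, which lies in $r_Q^*(\CV_Q(U))$ by a final application of Proposition \ref{varProp}. Summing over orbits yields the reverse containment. The main obstacle is precisely the bookkeeping in this last part: one must identify the $P$-$Q$-orbits of $X$ and verify that the twisting homomorphism $\tau$ lies in $\CF$, so that the naturality of block cohomology encoded in Lemma \ref{blockcohomology1}(iii) allows the descent from $\CV_{\tau(T)}$ to $\CV_T$ inside $\CV_B$; once this identification is in place, the remaining inclusions are routine telescopings of Proposition \ref{varProp}.
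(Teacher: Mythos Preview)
Your proof is correct and follows essentially the same strategy as the paper: decompose $kX$ into biset orbits and apply Proposition~\ref{varProp} together with Lemma~\ref{blockcohomology1}(iii). The only differences are minor bookkeeping choices---where the paper works directly with the $Q$-$Q$ (for (i) and (iii)) or $Q$-$P$ (for (ii)) orbit decomposition of $X$ and invokes Lemma~\ref{FstablemoduleLemma}(iii), you instead pass through the $kP$-module structure of $kX\tenkQ U$ and restrict via Mackey, but the key ingredients are identical.
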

  
  \begin{proof}
  It follows from Lemma \ref{FstablemoduleLemma} (i) that $X$ has a $Q$-$Q$-orbit
  isomorphic to $Q$, and hence that $U$ is isomorphic to a direct summand of
  $kX\tenkQ U$ as a $kQ$-module. This implies (i). Every $Q$-$P$-orbit of $X'$ is
  of the form $Q\ten_{(S,\varphi)} P$ for some subgroup $S$ of $Q$ and some morphism
  $\varphi : S\to P$ in $\CF$. Thus, by Lemma \ref{FstablemoduleLemma} (iii),
   every indecomposable direct summand of
  $kX'\tenkP kX\tenkQ U$ as a $kQ$-module is isomorphic to a direct summand of 
  $kQ \tenkS kX\tenkQ U$ for some subgroup $S$ of $Q$. By Proposition \ref{varProp}
  we have $\CV_Q(kQ \tenkS kX\tenkQ U)\subseteq \CV_Q(kX \tenkQ U)$.
  This shows that $\CV_Q(kX'\tenkP kX\tenkQ U)\subseteq$ $\CV_Q(kX\tenkQ U)$.
  By Lemma \ref{BLObisetLemma},  $X'$ is isomorphic to a $P$-$P$-subbiset of 
  $X'\times_P X$. Thus $kX'\tenkQ U$ is isomorphic to a direct summand of
  $kX'\tenkP kX\tenkQ U$ as a $kQ$-module, and we therefore have 
  $\CV_Q(kX'\tenkQ U) \subseteq$ $\CV_Q(kX'\tenkP kX\tenkQ U)$.  Together we
  get that $\CV_Q(kX'\tenkQ U)\subseteq$ $\CV_Q(kX\tenkQ U)$. Exchanging the 
  roles of $X$ and $X'$ shows that this inclusion is an equality, whence (ii).
  By Proposition \ref{BLObiset} (i), as a  $kQ$-module,  $kX\tenkQ U$ is isomorphic to 
 a direct sum of $kQ$-modules of the form $kQ \tenkR {_\psi{U}}$, with $R$ a subgroup
 of $Q$ and $\psi : R\to Q$ a morphism in $\CF$. By  Proposition \ref{varProp} we have
 $$\CV_Q(kQ \tenkR {_\psi{U}})=(\res^Q_R)^*({_\psi{U}})\ .$$
 Since $r^*_R = r^*_Q\circ (\res^Q_R)^*$, it follows that 
 $$r^*_Q(\CV_Q(kQ\tenkR {_\psi{U}})) = r^*_R(\CV_R({_\psi{U}})) = 
 r^*_{\psi(R)}(\CV_{\psi(R)}(U))$$
 where the last equality uses Lemma \ref{blockcohomology1} (iii). 
 Using Proposition \ref{varProp} again we get that
 $$r^*_{\psi(R)}\CV_{\psi(R)}(U)) = r^*_R((\res^Q_R)^*(\CV_{R}(U)))
 \subseteq r^*_Q(\CV_Q(U))\ .$$
 This proves (iii).  
  \end{proof}

 \begin{Lemma} \label{stablebisetvariety}
 Let $X$ be an $\CF$-characteristic $P$-$P$-biset, and let
  $U$ be a finitely generated $kP$-module. 
 If $U$ is $\CF$-stable, then $\CV_P(U) = \CV_P(kX\tenkP U)$.
 \end{Lemma}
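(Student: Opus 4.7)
The plan is to prove the two inclusions separately, and both fall out almost immediately from results already established in the excerpt.

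For the inclusion $\CV_P(U)\subseteq \CV_P(kX\tenkP U)$, I would apply Lemma \ref{stablebisetvarietyQ}(i) with $Q=P$; this only needs that $X$ is $\CF$-characteristic, not that $U$ is $\CF$-stable. Alternatively, one can cite Lemma \ref{FstablemoduleLemma}(ii): $U$ is a direct summand of $kX\tenkP U$ as a $kP$-module, so its variety is contained in that of $kX\tenkP U$.

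For the reverse inclusion $\CV_P(kX\tenkP U)\subseteq \CV_P(U)$, the hypothesis that $U$ is $\CF$-stable is essential. I would invoke Lemma \ref{FstablemoduleLemma}(vi): every indecomposable direct summand of $kX\tenkP U$ is isomorphic to a direct summand of $kP\tenkQ U = \Ind^P_Q(\Res^P_Q(U))$ for some subgroup $Q\leq P$. Since the variety of a $kP$-module is the union of the varieties of its indecomposable summands, it follows that
$$\CV_P(kX\tenkP U)\ \subseteq\ \bigcup_{Q\leq P} \CV_P(\Ind^P_Q(\Res^P_Q(U)))\ .$$
By the third inclusion in Proposition \ref{varProp}, each $\CV_P(\Ind^P_Q(\Res^P_Q(U)))$ is contained in $\CV_P(U)$, and taking the union preserves this inclusion.

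Combining the two inclusions yields the claimed equality. There is no real obstacle here because the two technical ingredients—the structural description of indecomposable summands of $kX\tenkP U$ when $U$ is $\CF$-stable, and the Avrunin--Scott-type inclusion for induction followed by restriction—have already been packaged in Lemma \ref{FstablemoduleLemma}(vi) and Proposition \ref{varProp}; the lemma is essentially their conjunction.
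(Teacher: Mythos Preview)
Your proof is correct and matches the paper's own argument essentially line for line: the paper also cites Lemma~\ref{stablebisetvarietyQ} for the inclusion $\CV_P(U)\subseteq \CV_P(kX\tenkP U)$, then uses Lemma~\ref{FstablemoduleLemma}(vi) together with Proposition~\ref{varProp} on each indecomposable summand of $kX\tenkP U$ for the reverse inclusion.
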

 
 \begin{proof}
 By Lemma \ref{stablebisetvarietyQ} we have $\CV_P(U)\subseteq$ $\CV_P(kX\tenkP U)$.
 Assume that $U$ is $\CF$-stable. Let $U'$ be an indecomposable direct
 summand of $kX\tenkP U$. By Lemma \ref{FstablemoduleLemma} (vi), $U'$ is isomorphic
 to a direct summand of $kP\tenkQ U$ for some subgroup $Q$ of $P$. Thus, by
 Proposition \ref{varProp}, we have $\CV_P(U')\subseteq\CV_P(kP\tenkQ U)
 \subseteq \CV_P(U)$.   This implies $\CV_P(kX\tenkQ U)\subseteq$ $\CV_P(U)$.
 The result follows.
 \end{proof}
 
 As a $kP$-$kP$-bimodule, $iBi$ is a direct summand of $kG$.  Thus $iBi$ has a
 $P$-$P$-stable $k$-basis $Y$. 
 
 \begin{Lemma}\label{iBibiset}
 Let $Y$ be a $P$-$P$-stable basis of $iBi$. Then $Y$ has a $P$-$P$-orbit isomorphic to
 $P$, and $Y$ satisfies the property (i) from Proposition \ref{BLObiset}. If in addition 
 $i$ is a source idempotent,  then $Y$ satisfies the properties (i) and (ii) from Proposition 
 \ref{BLObiset}.
 \end{Lemma}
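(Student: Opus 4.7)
The plan is to establish three claims in order: (A) every $P$-$P$-orbit of $Y$ is isomorphic to $P\times_{(Q,\varphi)}P$ for some $\varphi\in\Hom_\CF(Q,P)$—this is property (i) of Proposition~\ref{BLObiset}; (B) at least one such orbit is isomorphic to $P$ itself as a $P$-$P$-biset; and (C) when $i$ is a source idempotent, $|Y|/|P|$ is prime to $p$, giving property (ii).  Throughout I view $iBi$ as a direct summand of $kG$ as a $kP$-$kP$-bimodule, so its permutation-bimodule structure is inherited from the double coset decomposition $kG=\bigoplus_g k[PgP]$.

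For (A), each summand $k[PgP]$ is isomorphic as a $P$-$P$-biset to $P\times_{(P\cap g^{-1}Pg,\,c_g)}P$, so by the Krull--Schmidt theorem every transitive sub-bimodule of $iBi$—in particular every $P$-$P$-orbit of $Y$—is of the form $k[P\times_{(Q,\varphi)}P]$ with $\varphi$ induced by $G$-conjugation.  The defining property of an almost source idempotent, $\Br_Q(i)e_Q=\Br_Q(i)\neq 0$ for every $Q\leq P$, forces those $g$ whose double coset contributes non-trivially to $iBi$ to satisfy $ge_Qg^{-1}=e_{gQg^{-1}}$, which is precisely the condition that the induced $\varphi$ lies in $\Hom_\CF(Q,P)$.

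For (B), expand $i=\sum_{y\in Y}\lambda_y y$.  Since $i\in(iBi)^P$, the coefficients are constant on the $\Delta(P)$-orbits of $Y$.  An orbit sum over a non-singleton $\Delta(P)$-orbit is a relative trace $\tr^P_R$ from a proper subgroup $R<P$, hence lies in $\ker(\Br_P)$; since $\Br_P(i)\neq 0$, at least one singleton $\Delta(P)$-orbit $\{y\}\subseteq Y$ must carry a non-zero coefficient.  By (A) the $P$-$P$-orbit of such $y$ has the form $P\times_{(Q,\varphi)}P$ with $\varphi\in\Hom_\CF(Q,P)$, and an elementary stabilizer computation in $P\times P$ shows this biset admits a $\Delta(P)$-fixed point only when $Q=P$ and $\varphi\in\Inn(P)$, in which case it is isomorphic to $P$ as a $P$-$P$-biset.

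For (C), we have $|Y|/|P|=\sum_j[P:Q_j]\equiv\#\{j:Q_j=P\}\pmod p$, since $p$ divides $[P:Q_j]$ for $Q_j<P$.  The orbits with $Q_j=P$ correspond to the $N_G(P)$-double cosets contributing to $iBi$, and their span is the sub-algebra $ikN_G(P)i$, so this count equals $\dim_k(ikN_G(P)i)/|P|$.  When $i$ is a source idempotent of $B$, the algebra $ikN_G(P)i$ is a source algebra of the Brauer correspondent block of $B$ in $kN_G(P)$, and the standard source-algebra theory for blocks with normal defect group yields that its $kP$-rank is coprime to $p$.  I expect this last step to be the main obstacle: precisely identifying $ikN_G(P)i$ as a source algebra of the Brauer correspondent and invoking the correct numerical conclusion is where the distinction between genuine source idempotents and merely almost source idempotents becomes essential, since the conclusion can genuinely fail for almost source idempotents.
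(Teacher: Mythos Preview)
The paper's own proof is a two-line citation: it invokes \cite[Proposition 8.7.10]{LiBookII} for the orbit structure (property (i) and the existence of an orbit isomorphic to $P$), and Puig's result \cite[Theorem 6.15.1]{LiBookII} that $\dim_k(iBi)/|P|$ is prime to $p$ when $i$ is a source idempotent (property (ii)). Your proposal unpacks these citations into direct arguments, which is more informative but also exposes where the real work lies.

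Your step (B) is correct and is essentially the standard argument behind the cited proposition. Your step (A) is correct in outline, but the sentence ``the defining property of an almost source idempotent \ldots\ forces those $g$ \ldots\ to satisfy $ge_Qg^{-1}=e_{gQg^{-1}}$'' is not a consequence one can read off the definition; it is precisely Puig's characterisation of fusion in source algebras \cite{Pulocsource} (stated in the paper as \cite[Theorem 8.7.4]{LiBookII}), and needs to be cited as such rather than asserted.

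Your step (C) has a genuine gap. The object $ikN_G(P)i$ is not what you want: $i$ does not lie in $kN_G(P)$, so $ikN_G(P)i$ is merely a subspace of $iBi$, not a subalgebra, and there is no reason it should coincide with the span of the length-$|P|$ orbits of $Y$ or be a source algebra of the Brauer correspondent. What you actually need is simply $|Y|=\dim_k(iBi)$ together with Puig's theorem that this dimension divided by $|P|$ is prime to $p$ for a source idempotent; that theorem is exactly what the paper cites, and its proof (via multiplicity modules or the structure of $B^P$) does not pass through $ikN_G(P)i$. You correctly anticipate that this is where the source/almost-source distinction matters, but the route you sketch does not reach the conclusion.
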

 
\begin{proof} This follows, for instance, from
 \cite[Propositions 8.7.10]{LiBookII} together with the fact, due to Puig, that if
 $i$ is a source idempotent, then 
 $\frac{\dim_k(iBi)}{|P|}$ is prime to $p$ (see e. g. \cite[Theorem 6.15.1]{LiBookII}). 
 \end{proof}
 
 It is not known whether $i$ can always be chosen in such a way that $Y$ is an $\CF$-characteristic
 biset. See Proposition \ref{invertiblebasis} below for a sufficient criterion for $Y$ to 
 satisfy property (iii) of Proposition \ref{BLObiset}.

 \begin{Lemma} \label{iBiX}
 Let $Q$ be a subgroup of $P$.
 As a $kQ$-$kP$-bimodule, $iBi\tenkP kX$ is isomorphic to a direct sum of bimodules
 of the form $kQ \tenkR kX$, with $R$ running over the subgroups of $Q$. Moreover,
 $iBi\tenkP kX$ has a direct summand isomorphic to $kX$ as a $kQ$-$kP$-bimodule.
 \end{Lemma}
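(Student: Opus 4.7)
The plan is to exhibit a decomposition of $iBi$ as a $Q$-$P$-biset coming from a $P$-$P$-stable basis, then apply Lemma \ref{FstablemoduleLemma}(iii) termwise after tensoring with $kX$ over $kP$.

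First, by Lemma \ref{iBibiset}, I would fix a $P$-$P$-stable $k$-basis $Y$ of $iBi$ such that every transitive $P$-$P$-subbiset of $Y$ is of the form $P\times_{(S,\varphi)}P$ with $\varphi\in\Hom_\CF(S,P)$, and such that at least one of these $P$-$P$-orbits is isomorphic to $P$. The bimodule $iBi$ then splits as a direct sum of the $kP$-$kP$-bimodules $k(P\times_{(S,\varphi)}P)$ indexed by a set of $P$-$P$-orbit representatives. Restricting the left action to $Q$, I then decompose each such orbit into its $Q$-$P$-orbits via a double coset analysis of $Q\backslash P/S$: for a representative $x$, the $Q$-$P$-orbit of $[x,1]$ has $Q\times P$-stabiliser $\{(q,\varphi(x^{-1}qx)) : q\in R_x\}$, where $R_x=Q\cap xSx^{-1}\leq Q$. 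This orbit is therefore isomorphic to $Q\times_{(R_x,\psi_x)}P$, with $\psi_x:R_x\to P$ given by $\psi_x(q)=\varphi(x^{-1}qx)$. Since $\psi_x$ factors as conjugation by $x^{-1}$ (an $\CF$-morphism, as $x^{-1}\in P$ and $\CF$ contains all inner conjugations of $P$) followed by the restriction of $\varphi$ to $x^{-1}R_xx\leq S$ (also an $\CF$-morphism), $\psi_x$ is itself a morphism in $\CF$.

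Applying Lemma \ref{FstablemoduleLemma}(iii) to each $Q$-$P$-piece gives $k(Q\times_{(R_x,\psi_x)}P)\tenkP kX\cong kQ\otimes_{kR_x}kX$ as $kQ$-$kP$-bimodules. Summing over all $Q$-$P$-orbits of $Y$ yields the first assertion of the lemma. For the ``moreover'' statement, the $P$-$P$-orbit of $Y$ isomorphic to $P$, provided by Lemma \ref{iBibiset}, contributes a direct summand $kP$ of $iBi$ as a $kP$-$kP$-bimodule; tensoring this summand with $kX$ over $kP$ yields $kP\tenkP kX\cong kX$ as a $kP$-$kP$-bimodule summand of $iBi\tenkP kX$, and a fortiori as a $kQ$-$kP$-bimodule summand.

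The main obstacle is the bookkeeping in the double coset step: one must verify that the stabiliser of a representative really has the form $\Delta_{\psi_x}(R_x)$ for a group homomorphism $\psi_x$, and that this $\psi_x$ is genuinely a morphism in $\CF$ (not an ad hoc group homomorphism). Once this point is settled, the conclusion is an immediate application of Lemma \ref{FstablemoduleLemma}(iii) together with the existence of a $P$-$P$-orbit $\cong P$ in $Y$.
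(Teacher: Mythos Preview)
Your proof is correct and follows essentially the same route as the paper's. The paper simply cites Lemma~\ref{iBibiset} (or \cite[Theorem 8.7.1]{LiBookII}) to obtain the decomposition of $iBi$ as a $kQ$-$kP$-bimodule into pieces $kQ\tenkR{_\psi kP}$ with $\psi\in\CF$, and then applies the $\CF$-stability of $X$ to identify $kQ\tenkR{_\psi kX}\cong kQ\tenkR kX$; you unpack the first step explicitly via the double-coset analysis and phrase the second step as an application of Lemma~\ref{FstablemoduleLemma}(iii), but the underlying argument is the same.
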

 
 \begin{proof}
 By Lemma \ref{iBibiset} or  by \cite[Theorem  8.7.1]{LiBookII}, as a 
 $kQ$-$kP$-bimodule, $iBi$ is isomorphic
 to a direct sum of bimodules of the form $kQ \tenkR {_\psi{kP}}$, for some subgroup
 $R$ of $Q$ and some morphism $\psi : R\to P$ in $\CF$. Thus $iBi\tenkP kX$ is 
 isomorphic to a direct sum of $kQ$-$kP$-bimodules of the form $kQ \tenkR {_\psi{kX}}\cong$
 $kQ\tenkR kX$, where we use the $\CF$-stability of $X$. Since $\Br_P(i)\neq 0$, it
 follows that $iBi$ has a direct summand isomorphic to $kP$ as a $kP$-$kP$-bimodule,
 hence also as a $kQ$-$kP$-bimodule, and therefore $iBi\tenkP kX$
 has a direct summand isomorphic to $kX$ as a $kQ$-$kP$-bimodule. The result follows.
 \end{proof}
 
 \begin{Lemma} \label{ViBiX}
 Let $Q$ be a subgroup of $P$ and $W$ a finitely generated $kQ$-module. We have
 $$\CV_Q(iBi\tenkQ W) \subseteq \CV_Q(kX\tenkQ W)\ .$$
 \end{Lemma}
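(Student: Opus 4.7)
The plan is to sandwich $iBi\tenkQ W$ as a direct summand of $iBi\tenkP kX\tenkQ W$, and then apply Lemma~\ref{iBiX} together with Proposition~\ref{varProp} to bound the variety of the latter.

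First, I would observe that, by Lemma~\ref{FstablemoduleLemma}~(i), the biset $X$ has a $P$-$P$-orbit isomorphic to $P$, so that $kP$ is a direct summand of $kX$ as $kP$-$kP$-bimodule. Tensoring on the left with $iBi$ over $kP$, it follows that $iBi\cong iBi\tenkP kP$ is a direct summand of $iBi\tenkP kX$ as $kP$-$kP$-bimodule, hence also as $kP$-$kQ$-bimodule after restricting the right action to $kQ$. Tensoring on the right with $W$ over $kQ$ preserves this direct summand relation, so $iBi\tenkQ W$ is a direct summand of $iBi\tenkP kX\tenkQ W$ as left $kP$-modules, and in particular as left $kQ$-modules. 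Thus
$$\CV_Q(iBi\tenkQ W) \subseteq \CV_Q(iBi\tenkP kX\tenkQ W).$$

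Next, I would invoke Lemma~\ref{iBiX}: as $kQ$-$kP$-bimodule, $iBi\tenkP kX$ is a direct sum of bimodules of the form $kQ\ten_{kR} kX$, with $R$ a subgroup of $Q$ and $kX$ carrying its restricted $kR$-$kP$-bimodule structure. Tensoring with $W$ over $kQ$ (using the right $kP$-action on $kX$ restricted to $kQ$) yields an isomorphism of left $kQ$-modules
$$iBi\tenkP kX\tenkQ W \;\cong\; \bigoplus_{R\leq Q}\, kQ\ten_{kR} \Res^P_R(kX\tenkQ W).$$
By Proposition~\ref{varProp}, for each such $R$ we have
$$\CV_Q\bigl(kQ\ten_{kR} \Res^P_R(kX\tenkQ W)\bigr) \subseteq \CV_Q(kX\tenkQ W),$$
so combining with the inclusion from the previous paragraph gives the desired
$$\CV_Q(iBi\tenkQ W)\subseteq \CV_Q(kX\tenkQ W).$$

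The argument is essentially formal once the relevant bimodule structures are correctly identified, and the only real hurdle is the bookkeeping: the left $kQ$-action on $iBi\tenkP kX$ exploited via Lemma~\ref{iBiX} comes from the left-hand factor $iBi$, while the right $kQ$-action used in the tensor product $-\tenkQ W$ is the restriction to $kQ$ of the right $kP$-action on $kX$. Once these two $kQ$-structures are disentangled, the summand relation and Proposition~\ref{varProp} fit together without further work.
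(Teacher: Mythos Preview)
Your proof is correct and follows essentially the same route as the paper: first use that $kP$ is a summand of $kX$ to embed $iBi\tenkQ W$ into $iBi\tenkP kX\tenkQ W$, then apply Lemma~\ref{iBiX} to decompose the latter into pieces of the form $kQ\ten_{kR}(kX\tenkQ W)$ and finish with Proposition~\ref{varProp}. Your extra paragraph disentangling the two $kQ$-actions is a welcome clarification; the only cosmetic point is that the notation $\bigoplus_{R\leq Q}$ should be read as ``a direct sum of modules each of this form for some $R\leq Q$'' rather than a sum indexed by all subgroups.
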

 
 \begin{proof}
 Note that $kX$ has a direct summand isomorphic to $kP$ as a $kP$-$kP$-bimodule.
 Thus $iBi$ is isomorphic to  a direct summand of $iBi\tenkP kX$ as a $kP$-$kP$-bimodule, 
 hence also as a $kQ$-$kQ$-bimodule, and therefore
 $$\CV_Q(iBi\tenkQ W)\subseteq 
 \CV_Q(iBi\tenkP kX \tenkQ W)\ .$$  
 By Lemma \ref{iBiX}, as a $kQ$-module,
 $iBi\tenkP kX\tenkQ W$ is isomorphic to a direct sum of modules of
 the form $kQ\tenkR kX\tenkQ W$ with at least one summand where $R=Q$.
 Thus the variety $\CV_Q(iBi\tenkP kX \tenkQ W)$
 is contained in the union of varieties of the form $\CV_Q(kQ\tenkR kX\tenkQ  W)$.
 By Proposition \ref{varProp}, these are all contained in $\CV_Q(kX\tenkQ W)$,
 proving the result. 
 \end{proof}
 
 \begin{Proposition} \label{invertiblebasis}
 Let $G$ be a finite group, $B$ a block of $kG$, $P$ a defect group of $B$ and $i$ an
 almost source idempotent in $B^P$. Suppose that $iBi$ has a $P$-$P$-stable
 $k$-basis $X$ which is contained in $(iBi)^\times$. The following hold.
 
 \begin{enumerate}
 \item[{ \rm (i)}] If $i$ is a source idempotent, then $X$ is an $\CF$-characteristic  $P$-$P$-biset.
 \item[{\rm (ii)}] For every subgroup $Q$ of $P$ and any morphism $\varphi : Q\to P$ in
 $\CF$ we have an isomorphism of $kQ$-$B$-bimodules ${_\varphi{iB}}\cong iB$.
 \item[{ \rm (iii)}] For every finitely generated $B$-module $M$  the $kP$-module
 $iM$ is $\CF$-stable. 
 \end{enumerate}
 \end{Proposition}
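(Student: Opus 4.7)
My plan is to establish (ii) first, after which (iii) is a formal consequence and (i) reduces to a Krull--Schmidt argument. To prove (ii), I want to exhibit, for each $\varphi \in \Hom_\CF(Q,P)$, an invertible element $y \in (iBi)^\times$ with $y^{-1}uy = \varphi(u)$ for all $u \in Q$; once such $y$ is in hand, the right $B$-linear map $\Phi : iB \to iB$, $m \mapsto ym$, satisfies $\Phi(\varphi(u)m) = y\varphi(u)m = uym = u\Phi(m)$, so $\Phi$ is the required isomorphism ${_\varphi{iB}} \cong iB$ of $kQ$-$B$-bimodules. To construct $y$, I observe that by Lemma \ref{iBibiset} (together with the decomposition \cite[Theorem 8.7.1]{LiBookII} of $iBi$ as a $kP$-$kP$-bimodule), each $x \in X$ lies in a $P$-$P$-orbit $P \times_{(Q_x, \varphi_x)} P$ with $\varphi_x \in \Hom_\CF(Q_x, P)$, and the invertibility of $x$ gives $\varphi_x(u) = x^{-1}ux$. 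Applying Alperin's fusion theorem, $\varphi$ factors as a composition of restrictions of $\CF$-automorphisms of essential subgroups (and of $P$); the structure theorem for $iBi$ ensures that each such automorphism, up to pre- and post-composition with inner automorphisms of $P$, occurs as some $\varphi_{x_i}$. Taking $y$ to be the corresponding product of the $x_i$ together with suitable elements of $P$ (which are invertible in $iBi$) yields the desired element.

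For (iii), I use the identification $iM \cong iB \otimes_B M$ of left $kP$-modules; tensoring the isomorphism from (ii) with $M$ over $B$ gives
\[
{_\varphi{iM}} \cong {_\varphi{iB}} \otimes_B M \cong iB \otimes_B M \cong iM
\]
as $kQ$-modules, which is precisely the $\CF$-stability of $iM$. For (i), the hypothesis that $i$ is a source idempotent together with Lemma \ref{iBibiset} gives that $X$ satisfies properties (i) and (ii) of Proposition \ref{BLObiset}. For property (iii), I restrict the map $m \mapsto ym$ from (ii) to $iBi$: this is a $kQ$-$kP$-bimodule isomorphism ${_\varphi{iBi}} \cong iBi$, with both sides carrying $X$ as a permutation basis realizing the $Q$-$P$-bisets ${_\varphi{X}}$ and ${_Q{X}}$ respectively. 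Since $Q \times P^{\op}$ is a $p$-group, Krull--Schmidt for permutation bimodules (transitive $Q$-$P$-bisets correspond to indecomposable permutation bimodules over a $p$-group) forces the two bisets to be isomorphic; the analogous argument using right multiplication $m \mapsto my^{-1}$ yields $X_\varphi \cong X_Q$ as $P$-$Q$-bisets.

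The principal obstacle lies in the construction in (ii): while each element of $X$ produces an $\CF$-morphism by conjugation, the converse direction---that every $\CF$-morphism is implementable by conjugation with an element of $(iBi)^\times$---relies on both Alperin's fusion theorem and the precise description of the orbit morphisms arising from the basis of $iBi$. The invertibility hypothesis on $X$ is essential for closing the set of realized morphisms under composition and hence for reducing arbitrary $\CF$-morphisms to products of basis elements.
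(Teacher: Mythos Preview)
Your proposal is correct and follows essentially the same route as the paper. For (ii) both you and the paper reduce via Alperin's Fusion Theorem to $\CF$-automorphisms of $\CF$-centric subgroups, then invoke the Puig description of the bimodule summands of $iBi$ (what the paper cites as \cite[Proposition~8.7.10]{LiBookII}) to locate a basis element $x\in X$ with $ux = x\varphi(u)$; invertibility of $x$ turns left multiplication into the required bimodule isomorphism. Your derivation of (iii) from (ii) by tensoring with $M$ over $B$ is exactly the paper's argument.

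The one genuine difference is in (i): the paper simply cites \cite[Proposition~8.7.11]{LiBookII}, whereas you deduce property~(iii) of Proposition~\ref{BLObiset} from (ii) by restricting the isomorphism ${_\varphi{iBi}}\cong iBi$ and applying Krull--Schmidt for permutation modules over the $p$-group $Q\times P^{\op}$. This is a clean self-contained argument and is presumably close to what the cited reference does; it has the pleasant feature of making the logical dependence run (ii)~$\Rightarrow$~(i) rather than treating them independently.
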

 
 \begin{proof}
 Statement (i) is proved for instance in  \cite[Proposition 8.7.11]{LiBookII}.
 Let $Q$ be a subgroup of $P$ and $\varphi : Q\to P$ a morphism in $\CF$.
 By Alperin's Fusion Theorem \cite[Theorem 8.2.8]{LiBookII}, in order to prove
 (ii) we may assume that $Q$ is $\CF$-centric and that $\varphi$ is an
 automorphism of $Q$ composed with the inclusion map $Q\leq P$. By
 \cite[Proposition 8.7.10]{LiBookII} there exists an element $x\in X$ such that
 $ux=x\varphi(u)$ for all $u\in $ $Q$. One checks that left multiplication by
 $x$ on $iB$ is a homomorphism of $kQ$-$B$-bimodules ${_\varphi{iB}}\to  iB$.
 Since $x$ is invertible in $iBi$, this map is an isomorphism, proving (ii).
 We have $iM\cong iB\tenB M$, so (ii) implies (iii).
 \end{proof}
 
 It is not known whether every  block $B$ with defect group $P$
 has at least some almost source idempotent
 $i\in B^P$ such that the almost source algebra $iBi$ has a $P$-$P$-stable basis
 consisting of invertible elements. See \cite{BarGel} for equivalent reformulations
 of this problem,  as well as a number of cases in which this is true. 
 The following technical observation is a special case of Puig's
 characterisation of fusion in source algebras in \cite{Pulocsource}. 
 
 \begin{Lemma} \label{Pfusiondetect}
 Let $G$ be a finite group, $B$ a block of $kG$, $P$ a defect group of $B$ and $i$ a
 source idempotent in $B^P$.  Denote by  $\CF$ the fusion system on $P$ determined
 by $i$.  Let $\varphi\in\Aut(P)$. Then $\varphi\in\Aut_\CF(P)$ if and only if 
 ${_\varphi{iB}}\cong$ $iB$ as $kP$-$B$-bimodules.
 \end{Lemma}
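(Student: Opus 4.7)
The plan is to reformulate the bimodule isomorphism as the existence of an invertible element in the source algebra $iBi$ realising $\varphi$ by conjugation, and then to invoke Puig's characterisation of the fusion system $\CF$ through invertible elements of $iBi$.

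For the first step, left multiplication identifies $iBi$ with the ring of right $B$-module endomorphisms of $iB$; every such endomorphism has the form $f_a \colon m \mapsto am$ for a unique $a \in iBi$, and $f_a$ is bijective precisely when $a \in (iBi)^\times$. Regarding $f_a$ as a map ${_\varphi{iB}} \to iB$, the $kP$-linearity condition $f_a(\varphi(u) m) = u f_a(m)$ becomes, upon evaluating at $m = i$, the equation $a\varphi(u) = ua$ for all $u \in P$. Writing $s = a^{-1} \in (iBi)^\times$ and viewing $P$ as a subgroup of $(iBi)^\times$ via $u \mapsto iu = ui$, this says that conjugation by $s$ in $iBi$ realises $\varphi$ on $P$. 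Hence ${_\varphi{iB}} \cong iB$ as $kP$-$B$-bimodules if and only if there exists an invertible $s \in (iBi)^\times$ with $s u s^{-1} = \varphi(u)$ for every $u \in P$.

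For the second step, I would appeal to Puig's characterisation of fusion in source algebras \cite{Pulocsource} (see also \cite[Theorem 8.7.4]{LiBookII}): since $i$ is a source idempotent of $B$, an automorphism $\psi \in \Aut(P)$ belongs to $\Aut_\CF(P)$ if and only if there exists an invertible element $s \in (iBi)^\times$ with $s u s^{-1} = \psi(u)$ for all $u \in P$. Applied to $\psi = \varphi$ and combined with the first step, this yields the lemma.

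The substantive content lies entirely in Puig's theorem, which is the main obstacle one hides behind. Producing an invertible $s$ from a fusion-system automorphism relies on lifting elements of $N_G(P, e_P)$ through the interior $P$-algebra structure of $iBi$, while the converse extracts the fusion datum from the action of $s$ on Brauer quotients and local points of $P$ on $iBi$. Since both directions are subsumed by the cited result, beyond the bimodule-to-element translation no further work is required here.
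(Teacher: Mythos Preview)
Your argument is correct and is essentially the same as the paper's: both reduce to Puig's characterisation of fusion in source algebras, cited as \cite[Theorem 8.7.4]{LiBookII}. The paper simply observes that the bimodule statement is the special case $Q=R=P$ of part (ii) of that theorem, whereas you insert the (correct) intermediate translation into a statement about invertible elements of $iBi$ before invoking the same reference; this extra step is not needed but does no harm.
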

 
 \begin{proof}
 This is the special case of \cite[Theorem 8.7.4.(ii)]{LiBookII} applied to the case 
 where $P=Q=R$ and $i$ is an actual source idempotent.
 \end{proof}
 
 \begin{Proposition} \label{PnormalF}
  Let $G$ be a finite group, $B$ a block of $kG$, $P$ a defect group of $B$ and $i$ a
 source idempotent in $B^P$.  Denote by  $\CF$ the fusion system on $P$ determined
 by $i$ and suppose that $\CF=N_\CF(P)$. 
For every finitely generated $B$-module $M$ the $kP$-module 
 $iM$ is $\CF$-stable. 
 \end{Proposition}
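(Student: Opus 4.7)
The plan is to reduce the $\CF$-stability of $iM$ to the statement that ${_\psi{(iM)}} \cong iM$ as $kP$-modules for every $\psi \in \Aut_\CF(P)$, and then deduce this from Lemma \ref{Pfusiondetect} by tensoring with $M$ over $B$.

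First I would exploit the hypothesis $\CF = N_\CF(P)$: by the defining property of the normaliser fusion system, every morphism $\varphi : Q \to P$ in $\CF$ is of the form $\psi|_Q$ for some $\psi \in \Aut_\CF(P)$. Consequently, for any finitely generated $B$-module $M$, one has an isomorphism of $kQ$-modules
$$
{_\varphi{(iM)}} \;\cong\; \Res^P_Q\bigl({_\psi{(iM)}}\bigr).
$$
Thus it suffices to prove that ${_\psi{(iM)}} \cong iM$ as $kP$-modules for every $\psi \in \Aut_\CF(P)$.

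For this, I would apply Lemma \ref{Pfusiondetect}, which gives precisely ${_\psi{(iB)}} \cong iB$ as $kP$-$B$-bimodules whenever $\psi \in \Aut_\CF(P)$. Using the natural identification $iM \cong iB \tenB M$ as $kP$-modules, tensoring the bimodule isomorphism with $M$ over $B$ yields
$$
{_\psi{(iM)}} \;\cong\; {_\psi{(iB)}} \tenB M \;\cong\; iB \tenB M \;\cong\; iM
$$
as $kP$-modules. Combining with the first step, ${_\varphi{(iM)}} \cong \Res^P_Q(iM)$ for every morphism $\varphi : Q \to P$ in $\CF$, which is the definition of $\CF$-stability of $iM$.

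There is no real obstacle in this argument; the proof is essentially a transport of the bimodule-level fusion detection in Lemma \ref{Pfusiondetect} to the module level via the functor $- \tenB M$. The only point requiring some care is keeping track of which side carries the twist: the twist ${_\psi{(-)}}$ is only in the left $kP$-action, which is unaffected by tensoring over $B$ on the right, so the isomorphism of $kP$-$B$-bimodules produces a genuine isomorphism of $kP$-modules after tensoring with $M$. The hypothesis that $i$ is a (genuine) source idempotent is needed only to invoke Lemma \ref{Pfusiondetect}.
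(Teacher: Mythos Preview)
Your proof is correct and follows essentially the same approach as the paper: reduce to automorphisms of $P$ via the hypothesis $\CF=N_\CF(P)$, then use Lemma \ref{Pfusiondetect} together with the identification $iM\cong iB\tenB M$. The paper's proof is just a terser version of what you wrote.
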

 
 \begin{proof}
 Since $\CF=N_\CF(P)$, it suffices to check the fusion stability condition on $iM$ for
 automorphisms of $P$ in $\CF$. This follows from the obvious $kP$-isomorphism 
 $iB\tenB M\cong$ $iM$ and Lemma \ref{Pfusiondetect}.
 \end{proof}
 
 \begin{Lemma} \label{Mstable}
  Let $G$ be a finite group, $B$ a block of $kG$, $P$ a defect group of $B$ and $i$ an
 almost source idempotent in $B^P$.  Denote by  $\CF$ the fusion system on $P$ determined
 by $i$. For every finitely generated $B$-module $M$ the $kP$-module 
 $\Res^G_P(M)$ is $\CF$-stable. 
 \end{Lemma}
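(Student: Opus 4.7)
The plan is to reduce the claim to the fact that every morphism of $\CF$ lifts to a $G$-conjugation, and then to produce the required fusion-stability isomorphism by left multiplication by a conjugating element of $G$.

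Given a subgroup $Q\leq P$ and a morphism $\varphi: Q\to P$ in $\CF$, I would first invoke the definition of $\CF=\CF_B(P)$ recalled before Definition \ref{blockcohomologyDef} to pick $x\in G$ with $xQx^{-1}\leq P$ and $\varphi(u)=xux^{-1}$ for all $u\in Q$. (The extra block-matching condition $xe_Qx^{-1}=e_{xQx^{-1}}$ is available but plays no role in this argument.) I would then propose the map
$$\alpha : \Res^G_Q(M) \to {_\varphi\Res^G_P(M)}, \qquad \alpha(m) = xm\ .$$
This is a $k$-linear bijection, being left multiplication by a unit of $kG$, and its $kQ$-linearity reduces to the one-line computation
$$\alpha(um) = xum = (xux^{-1})(xm) = \varphi(u)\cdot\alpha(m)\ ,$$
where the right-hand product is precisely the action of $u\in Q$ on $\alpha(m)$ as an element of ${_\varphi\Res^G_P(M)}$, i.e.\ the action of $\varphi(u)\in P$ through the $kP$-module structure on $\Res^G_P(M)$. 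This directly yields the isomorphism required by Definition \ref{stablemoduleDef}.

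There is essentially no obstacle, because the argument uses nothing about $B$, $i$ or the blocks $e_Q$ beyond the fact that morphisms in $\CF$ come from $G$-conjugations. In particular, it shows the a priori stronger statement that $\Res^G_P(M)$ is stable for the full fusion system $\CF_P(G)$ on $P$ induced by $G$, and the claim for $\CF$ follows since $\CF=\CF_B(P)$ is a subcategory of $\CF_P(G)$.
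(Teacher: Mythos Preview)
Your proof is correct and follows essentially the same approach as the paper: both use that every morphism $\varphi:Q\to P$ in $\CF$ is realised by conjugation with some $x\in G$, and both exhibit the required $kQ$-isomorphism $\Res^G_Q(M)\cong {_\varphi M}$ as left multiplication by $x$. Your version is simply more explicit in verifying the $kQ$-linearity and in observing that the argument in fact yields stability for all of $\CF_P(G)$.
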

 
 \begin{proof}
 Let $Q$ be a subgroup of $P$ and $\varphi : Q\to P$ a morphism in $\CF$. Then 
 there exists an element $x\in$ $G$ such that $\varphi(u) = xux^{-1}$ for all $u\in$ $Q$.
 Then the map sending $m\in$ $M$ to $xm$ is an isomorphism of $kQ$-modules
 $\Res^G_Q(M)\cong$ ${_\varphi{M}}$.
 \end{proof}

\section{Proofs} %%%%%%%%%%%%%%%%%%%%%%%%
\label{proofSection}

\begin{proof}[Proof of Theorem \ref{thm1}]
Set $U = kX\tenkP iM$. Note that the $kP$-module $U$ is $\CF$-stable.
By Lemma \ref{stablebisetvariety} we have 
$$\CV_B(M) = r_P^*(\CV_P(iM)) = r_P^*(\CV_P(U))$$
and hence we have
$$\CV_P(U) \subseteq (r^*_P)^{-1}(\CV_B(M))\ .$$
We observe first that it suffices to show Theorem \ref{thm1} for $Q=P$. Indeed,
suppose that 
$$\CV_P(U) = (r_P^*)^{-1}(\CV_B(M)) \ .$$
Let $Q$ be a subgroup of $P$. By \cite[Theorem 3.1]{AvSc} we have 
$$\CV_Q(U)= ((\res^P_Q)^*)^{-1}(\CV_P(U)\ .$$
Since $r_Q = \res^P_Q \circ r_P$, it follows from these two equalities that
$$(r_Q^*)^{-1}(\CV_B(M))=((\res^P)Q)^*)^{-1}( (r_P^*)^{-1}(\CV_B(M))) =
((\res^P_Q)^*)^{-1}(\CV_P(U))=\CV_Q(U)\ .$$
This shows that it suffices to prove Theorem \ref{thm1} for $Q=P$.
We need to show that the inclusion $\CV_P(U) \subseteq$$ (r^*_P)^{-1}(\CV_B(M))$
is an equality.  Let $z\in$ $(r_P^*)^{-1}(\CV_B(M))$. We need to show that $z\in$
$\CV_P(U)$.  By choice of $z$, we have $z\in \CV_P$ and 
$r^*_P(z) \in \CV_B(M)$. Quillen's stratification applied to the $kP$-module $U$ yields
$$\CV_P(U) = \cup_E\ (\res^P_E)^*(\CV_E^+(U)\ ,$$
where $E$ runs over a set of representatives of the conjugacy classes of elementary
abelian subgroups of $P$. This is a disjoint union. 

Quillen's stratification applied to $\CV_P$ implies that  $z\in$ $\CV_{P,E}^+=$
$(\res^P_E)^*(\CV_E^+)$ for some elementary abelian subgroup $E$ of $P$; that is, we have 
$$z = (\res^P_E)^*(s)$$
for some $s\in \CV_E^+$. Note that $E$ is unique up to conjugation in $P$ and $s$ is unique
up to the action of $N_P(Q)$. 

We need to show that $E$ and $s$ can be chosen in such a way that $s\in$ $\CV_E^+(U)$.
The block variety version of Quillen's stratification, reviewed in Theorem \ref{Quillenstrat}
and preceding paragraphs,   implies that 
 $$r^*_P(z)= r^*_F(t)$$
 for some fully $\CF$-centralised elementary abelian subgroup $F$ of $P$ and some
 $t\in \CV_F^+(iM)$. Applying $r^P_*$ to the first equation yields
 $$r^*_P(z) = r^*_E(s)\ .$$
 This implies that  $r^*_E(s)=$ $r^*_F(t)$ in the block variety $\CV_B$. The analogue of
 Quillen's stratification for the block variety $\CV_B$ implies that  there is an isomorphism 
 $\varphi : E \cong F$ in $\CF$ such that $w=\res_\varphi^*(s)$ and $t$ 
 are in the same $\Aut_\CF(F)$-orbit in $\CV_F^+$.  That is, after composing $\varphi$
 with a suitable automorphism of $F$, we may assume that $t=\res_\varphi^*(s)$.
 Now $t$ belongs to $\CV_F^+(iM)\subseteq\CV_F^+(U)$. The $\CF$-stability of $U$
 implies that $s\in$ $\CV_E^+(U)$. This completes the proof of Theorem \ref{thm1}.
\end{proof}

Just as for Theorem \ref{thm1} it follows from \cite[Theorem 3.1]{AvSc} that it suffices
to prove any of the five Corollaries to Theorem \ref{thm1} for $Q=P$.
Note further that thanks to Lemma \ref{almostsourcevariety} we may assume that in 
all of these Corollaries the almost source idempotent is a source idempotent

\begin{proof}[Proof of Corollary \ref{cor1}]
This follows from Theorem \ref{thm1} combined with Lemma \ref{stablebisetvariety}.
\end{proof}

\begin{proof}[Proof of Corollary \ref{cor2}]
This follows from Corollary \ref{cor1} and Proposition \ref{invertiblebasis}.
\end{proof}

\begin{proof}[Proof of Corollary \ref{cor3}]
This follows from Corollary \ref{cor1} and Proposition \ref{PnormalF} (here we
make use of the fact that $i$ can be assumed to be a source idempotent, by
Lemma \ref{almostsourcevariety}).
\end{proof}

\begin{proof}[Proof of Corollary \ref{cor4}]
Since $P$ is abelian, it is well-known that $\CF=N_\CF(P)$ (see e. g. 
\cite[Proposition 8.3.8]{LiBookII}).
Thus Corollary \ref{cor4} follows from Corollary \ref{cor3}.
\end{proof}

\begin{Remark}
It is shown in \cite[Proposition 1.7]{BarGel}  that in the situation of Corollaries
\ref{cor3}, \ref{cor4} the source algebras have $P$-$P$-stable bases consisting
of invertible elements. Thus these two corollaries follow from this combined with
Corollary \ref{cor2}.
\end{Remark}

\begin{proof}[Proof of Corollary \ref{cor5}]
By Lemma \ref{Mstable}, the restriction to $P$ of any finitely generated 
$B$-module is $\CF$-stable. Since $B$ is assumed to be of principal type,
it follows that $1_B$ is an almost source idempotent of $B$. Thus
Corollary \ref{cor5} follows from Corollary \ref{cor1}.
\end{proof}

Corollary \ref{cor5} can also be proved by combining \cite[Corollary 2.5]{BarGel} with
Corollary \ref{cor2}.

\begin{proof}[Proof of Theorem \ref{thm2}]
By \cite[Theorem 1.1]{BeLi}, we have $\CV_B(M)=$ $r^*_Q(\CV_Q(U))$, and hence
we have $\CV_Q(U)\subseteq$ $(r_Q^*)^{-1}(\CV_B(M)$. By Lemma \ref{stablebisetvarietyQ}
we have $r^*_Q(\CV_Q(U))=$ $r^*_Q(\CV(kX\tenkQ U))$, and therefore
$$\CV_Q(kX\tenkQ U) \subseteq (r^*_Q)^{-1}(\CV_B(M))\ .$$
We need to show that this inclusion is an equality. By Theorem \ref{thm1} we have
$$(r^*_Q)^{-1}(\CV_B(M)) = \CV_Q(kX \tenkP iM)\ .$$
By the choice of the vertex-source pair $(Q,U)$ of $M$, the $iBi$-module $iM$ is 
isomorphic to a direct summand of $iBi\tenkQ U$. Thus we have 
$$\CV_Q(kX\tenkP iM) \subseteq \CV_Q(kX\tenkP iBi \tenkQ U)\ .$$
Now $iBi$ is isomorphic to a direct summand of $iBi\tenkP X$ as a
$kP$-$kQ$-bimodule, and hence we get an inclusion
$$\CV_Q(kX\tenkP iBi \tenkQ U)\subseteq  \CV_Q(kX\tenkP iBi \tenkP kX \tenkQ U)\ .$$
Let $Y$ be a $P$-$P$-stable $k$-basis of $iBi$, so that $iBi\cong kY$ as $kP$-$kP$-bimodule.
By Lemma \ref{iBibiset}, $Y$ satisfies the properties (i) and (ii) from Proposition 
\ref{BLObiset}. It follows from 
Lemma \ref{BLObisetLemma}, that the set $X\times_P Y \times_P  X$ is an $\CF$-characteristic
$P$-$P$-biset. Thus, by Lemma \ref{stablebisetvarietyQ} we have an equality
$$\CV_Q(kX\tenkP iBi \tenkP kX \tenkQ U) = \CV_Q(kX \tenkQ U)\ .$$
Together this shows the inclusion
$$(r^*_Q)^{-1}(\CV_B(M) \subseteq  \CV_Q(kX \tenkQ U)\ .$$
This completes the proof of Theorem \ref{thm2}. 
\end{proof}

\section{Examples} %%%%%%%%%%%%%%%%%%%%%%%%%%%%%%%%%

With the notation of Theorem \ref{thm1}, we do not know of an example where the
inclusion $\CV_P(iM)\subseteq$ $\CV_P(kX\tenkP iM)$ is strict. The following example
constructs  a finitely generated $kP$-module $U$ such that the
inclusion $\CV_P(U) \subseteq$ $\CV_P(kX\tenkP U)$ is strict. 

\begin{Example} \label{ex1}
Suppose that $p$ is odd.  Let $Q$, $R$ be cyclic groups of order $p$, and  let $u$, $v$ be a 
generator of $Q$, $R$, respectively. Set $P=Q\times R$. Let $\tau$ be the automorphism of
order $2$ of $P$ which exchanges $u$ and $v$ (identified to their images in $P$).
Set $V=\Ind^P_Q(k)$ and $W=\Ind^P_R(k)$. Since $\tau$ exchanges $Q$ and $R$, it
follows that $V$ and $W$ are exchanged by $\tau$; that is, $W\cong {_\tau{V}}$ and
$V\cong {_\tau{W}}$.  Set $L=P\rtimes\langle\tau\rangle$ and denote by $\CF$ the
fusion system of $L$ on $P$.  We have
$$\Res^L_P\Ind^L_P(V)\cong \Res^L_P\Ind^L_P(W) \cong V\oplus W\ .$$
By Proposition \ref{varProp} we have
$$\CV_P(V) = (\res^P_Q)^*(\CV_Q)\ ,$$
$$\CV_P(W)=(\res^P_R)^*(\CV_R)\ .$$
Since $Q$, $R$ are different cyclic subgroups of $P$, the varieties $\CV_P(V)$ and
$\CV_P(W)$ are different lines in $\CV_P$.
Note that $kL$ has a unique block $B=kL$ and
 that $H^*(L)=$ $H^*(B)$ is the subalgebra of $\tau$-stable elements in $H^*(P)$, or
 equivalently, the subalgebra of $\CF$-stable elements in $H^*(P)$.  The $P$-$P$-biset
 $X=L$ is an $\CF$-characteristic biset. Since $L=P\cup P\tau$, it follows that
 $$kX \tenkP V = V\oplus W\ \cong \Res^L_P\Ind^L_P(V)\ $$
 from which we get a strict inclusion
 $$\CV_P(V) \subseteq \CV_P(kX\tenkP V) = \CV_P(V)\cup \CV_P(W)\ .$$ 
 Denote by
 $r_P : H^*(L) \to H^*(P)$ the inclusion map, and by $r_P^* : \CV_P \to \CV_L$ the induced
 map on varieties.  By Proposition \ref{varProp} we have
 $$r^*_P(\CV_P(V)) = \CV_L(\Ind^L_P(V)) = r^*_P(\CV_P(W))\ .$$
 By  \cite[Theorem (3.1)]{AvSc}, applied to $\Ind^L_P(V)$, we have
 $$(r^*_P)^{-1}(\CV_L(\Ind^L_P(V))) = \CV_P(V\oplus W) = \CV_P(V) \cup \CV_P(W)\ .$$
Since
the action on $\CV_P$ induced by $\tau$ exchanges $\CV_P(V)$ and $\CV_P(W)$,
it follows that $r_P^*(\CV_P(V))=r^*_P(\CV_P(W))$.  Thus $\CV_P(V)$  and $\CV_P(W)$
are both contained in $(r^*_P)^{-1}(r^*_P(\CV_P(V)))$. This shows that we have a strict inclusion
$\CV_P(V) \subseteq  (r^*_P)^{-1}(r^*_P(\CV_P(V)))$. 
\end{Example}

\begin{Remark}
The Example \ref{ex1} contradicts the inclusion $\supseteq$ in the statement of
\cite[Theorem 2.2]{To14}.
While the inclusion $\subseteq$ in \cite[Theorem 2.2]{To14} holds in the generality as
stated there, for the reverse inclusion one needs some extra hypotheses. With the notation
of \cite[Theorem 2.2]{To14}, the following hypotheses, communicated to the author by
C.-C. Todea, are sufficient for the reverse inclusion:  $\CF_1$and  $\CF_2$ are
saturated fusion systems of finite groups $G_1\leq $ $G_2$ on $P_1\leq$ $P_2$ and
$U$ is a finitely generated $kG_2$-module. 
\end{Remark}

\begin{Example} \label{ex2}
We adapt the previous example to show that tensoring by $kX$ over $Q$ in
Theorem \ref{thm2} is necessary if $Q$ is a proper subgroup of $P$, even possibly when 
$B$ is a nilpotent block.
Let $p=2$ and $Q$ be a Klein four group. Write
$Q=\langle s\rangle \times \langle t\rangle$ with involutions $s$, $t$. The group
$\GL_2(k)$ acts on $kQ$ in the obvious way (by sending $s$, $t$ to shifted cyclic
subgroups). Let $W=kQ/\langle t\rangle$; this is a $2$-dimensional $kQ$-module
with vertex $\langle t\rangle$, hence periodic of period $1$. 
Since there are only finitely many isomorphism classes
of $kQ$-modules with cyclic vertex, it follows that ${_\tau{W}}$ has vertex $Q$ for almost
all $\tau \in$ $\GL_2(k)$.  Set $P=Q\rtimes \langle u\rangle$
for some involution $u$ satisfying $usu=t$ (so that $P$ is a dihedral group). 
Choose $\tau\in$ $\GL_2(k)$ such that $U={_\tau{W}}$ has vertex $Q$ and such that
$c_u\circ \tau\neq \tau$, where $c_u$ is conjugation by $u$ regarded as an automorphism
of $kQ$.  Set $M=\Ind^P_Q(U)$ and $U'={_{c_u}{U}}$. Then $\Res^P_Q(M)\cong U\oplus U'$.
Both $(Q,U)$ and $(Q,U')$ are vertex-source pairs of $M$. 
Since $U$, $U'$ are periodic, the choice of $\tau$ implies that the varieties  $\CV_Q(U)$ 
and $\CV_Q(U')$ are different lines in $\CV_Q$. 
The fusion system $\CF$ is in this situation
the trivial fusion system $\CF_P(P)$, and the set $X=$ $P$, as a $P$-$P$-biset, is a 
characteristic biset of $\CF$. Thus, as a $kQ$-module, we have $kX\tenkQ U\cong$ 
$\Res^P_Q(\Ind^P_Q(U))\cong$ $U\oplus U'$,
and since the varieties $\CV_Q(U)$ and $\CV_Q(U')$ are different, it follows that
$\CV_Q(U)$ is properly contained in $\CV_Q(kX\tenkQ U)$.
\end{Example}

%%%%%%%%%%%%%%%%%%%%%%%%%%%%%%%%%%%%%%%%%%%%%%%%%

\end{document}